\numberwithin{equation}{section}
\theoremstyle{plain}
\newtheorem{theorem}{Theorem}[section]
\newtheorem{lemma}[theorem]{Lemma}
\newtheorem{fact}[theorem]{Fact}
\newtheorem{proposition}[theorem]{Proposition}
\theoremstyle{definition}
\theoremstyle{remark}
\newtheorem{remark}[theorem]{Remark}
\newcommand{\N}{\mathbb{N}}
\newcommand{\Z}{\mathbb{Z}}
\newcommand{\R}{\mathbb{R}}
\newcommand{\ind}[1]{\mathbbm{1}_{\left\{#1\right\}}}
\newcommand{\floor}[1]{{\left\lfloor #1 \right\rfloor}}
\newcommand{\ceil}[1]{{\left\lceil #1 \right\rceil}}
\DeclareMathOperator{\E}{\mathbf{E}}
\renewcommand{\P}{\mathbf{P}}
\newcommand{\crochet}[1]{{\langle #1 \rangle}}
\newcommand{\e}{\mathrm{e}}
\newcommand{\dd}{\mathrm{d}}
\renewcommand{\bar}[1]{\overline{#1}}
\renewcommand{\tilde}[1]{\widetilde{#1}}
\renewcommand{\epsilon}{\varepsilon}
\renewcommand{\phi}{\varphi}
\newcommand{\T}{\mathbbm{T}}
\title{The longest branches in a non-Markovian phylogenetic tree}
\author{Sergey Bocharov\thanks{Department of Foundational Mathematics, Xian Jiaotong-Liverpool University, Ren’Ai Road
111, Suzhou 215123, China, e-mail: \texttt{Sergey.Bocharov@xjtlu.edu.cn}.} \and Simon C. Harris\thanks{Department of Statistics, University of Auckland, 38 Princes Street, Auckland, 1001, New Zealand, e-mail: \texttt{simon.harris@auckland.ac.nz}} \and Bastien Mallein\thanks{Institut de Mathématiques de Toulouse, UMR 5219, Université de Toulouse, UPS, F-31062 Toulouse Cedex 9, France, email: \texttt{bastien.mallein@math.univ-toulouse.fr}}}
\date{\today}
\begin{document}
\maketitle

\begin{abstract}
Consider a Bellman--Harris-type branching process, in which individuals evolve independently of one another, giving birth after a random time $T$ to a random number $L$ of children. In this article, we study the asymptotic behaviour of the length of the longest branches of this branching process at time $t$, both pendant branches (corresponding to individuals still alive at time $t$) and interior branches (corresponding to individuals dead before time $t$).
\end{abstract}

\section{Introduction}

We consider a branching particle system initiated by a single particle, defined in the following way. Let $(T,L)$ be a random element of $\R_+ \times \Z_+$. Each individual in that process evolves independently of one another. An individual $u$ stays alive for $T_u$ units of time, before giving birth to $L_u$ children, where $(T_u,L_u)$ is an independent copy of $(T,L)$. Those children reproduce independently with the same law of their parent. We work under the conditions
\begin{equation}
\label{eqn:supercritical}
   \P(T = 0) = \P(T = \infty) = 0 \quad \text{and} \quad \E(L) \in (1,\infty).
\end{equation}
The first condition avoids considering a degenerate situation in which an individual immediately dies out or live forever, while the second one guarantees that the population is supercritical. As a result, the process survives forever with positive probability.

When $T$ and $L$ are independent, this process was introduced by Bellman and Harris \cite{BeH} as a generalisation of continuous-time Galton--Watson processes (in which $T$ is exponentially distributed). This model, in which the lifetime of an individual is correlated with its number of offspring is often called a \emph{Sevast'yanov process}, after their namesake who introduced such processes in \cite{Sevastyanov,Sevastyanov2}. An other way to see Sevast'yanov processes is as a subclass of Crump--Mode--Jagers (CMJ) branching processes in which individuals can only give birth to children at their death-time.

Let us remark that the condition $T > 0$ a.s. could be slightly weakened. In particular, if we identify as children of an individual all its descendants born at its death time, we recover a branching process satisfying \eqref{eqn:supercritical}, provided that $\E(L;T = 0) \leq 1$ (so that the clusters of individuals born at the same time as their parent form a sub-critical branching processes, hence have finite total progeny). If $\E(L; T = 0) > 1$, then the process would explode in finite time a.s. on its survival event, which justifies its restriction.

As is usual, we can identify individuals in a Sevast'yanov process with a label that also encodes their genealogy (see Section \ref{sec:pf} for more details).
We then denote by $\T$ be the set of all individuals that ever live. The individual $u \in \mathbbm{T}$ is born at time $b_u$, and dies at time $d_u$.
We also let $\T_t =\{u \in \T : b_u \leq t\}$ be the set of individuals born before time $t$, and  $\mathcal{N}_t = \{u \in \T : b_u \leq t < d_u\}$ the set of individuals alive at time $t$.

The objective of this article is to study the asymptotic behaviour of the ages of the oldest living and extinct individuals at time $t$ in this population model. This result extends the recent work of Bocharov and Harris \cite{BoHa}, that proved a similar result for continuous-time Galton--Watson trees. In particular, we aim to describe the asymptotic behaviour of
\begin{equation}
  \label{eqn:longestEdges}
  M_t^\mathrm{p} = \max_{u \in \mathcal{N}_t} (t- b_u) \quad \text{ and } \quad M_t^\mathrm{i} = \max_{u \in \T_t\setminus\mathcal{N}_t} (d_u-b_u),
\end{equation}
which are, respectively, the age of the oldest individual still alive at time $t$, and the age of the oldest individual that died before time $t$.
That is, $M_t^\mathrm{p}$ corresponds to the longest \emph{pendant} edge of the genealogical tree up to time $t$, and $M_t^\mathrm{i}$ is the longest \emph{interior} edge up to time $t$.
We will define below a deterministic function $(\ell_t)$ such that $(M_t^\mathrm{p}-\ell_t, M_t^\mathrm{i} - \ell_t)$ jointly converge in distribution as $t \to \infty$.

Whenever it exists, the so-called \emph{Malthusian parameter} plays an important role in the study of CMJ branching processes. For a Sevast'yanov process, it is defined as the unique parameter $\alpha > 0$ satisfying
\begin{equation}
  \label{eqn:malthusianParameter}
  \E(L e^{-\alpha T}) = 1.
\end{equation}
Note that \eqref{eqn:supercritical} guarantees the existence of this parameter as $\alpha \mapsto \E(Le^{-\alpha T})$ is continuous and strictly decreasing on $[0,\infty)$, starting from $\E(L)>1$ down to $0$.

The Malthusian parameter allows us to study the rate of exponential growth of the population over time. Provided that
\begin{equation}
  \label{eqn:nonLattice}
  \forall a > 0,\ \ \P(T \in a \N) < 1,
\end{equation}
i.e. that the law of $T$ is non-lattice, Nerman \cite{Nerman} proved that the size of the population grows exponentially at rate $\alpha$. Specifically, in our situation, we have
\begin{equation}
  \label{eqn:exponentialGrowth}
  \lim_{t \to \infty} e^{-\alpha t} \#\mathcal{N}_t =  \frac{1 - \E(e^{-\alpha T})}{\E(\alpha T e^{-\alpha T} L)} Z_\infty \quad \text{ in probability,}
\end{equation}
as soon as
\begin{equation}
 \label{eqn:llogl}
 \E(e^{-\alpha T} L \log_+ L) < \infty.
\end{equation}
Since $\sup_{t \in \R_+} t e^{-\alpha t} < \infty$, we remark that $\E(\alpha T e^{-\alpha T} L) < \infty$ by \eqref{eqn:supercritical}.

The random variable $Z_\infty$ is the a.s. limit of the so-called \emph{Malthusian martingale} defined as
\begin{equation}
  \label{eqn:malthusianMartingale}
  Z_t := \sum_{u \in \mathcal{N}_t} L_u e^{-\alpha d_u} = \sum_{v \in \T} e^{-\alpha b_v} \ind{b_v > t \geq b_{\pi v}},
\end{equation}
where $\pi v$ represents the parent of $v$. Here, $(Z_t, t \geq 0)$ is a non-negative martingale with respect to the filtration $\mathcal{F}^\uparrow$ where $\mathcal{F}^\uparrow_t := \sigma\left((T_u, L_u), u \in \mathbbm{T} : b_u < t\right)$. This is often called the filtration of the coming generation, as information on the death time and number of children of individuals alive at time $t$ is already available at that time, thereby looking a generation into the future. This \emph{coming generation} at time $t$ consists of all individuals who are born after $t$ but whose parents were born before $t$. For more details, see the classic book of Jagers \cite{Jagers}.

The exponential growth of branching population models have a long history. Kesten and Stigum \cite{KesSti} proved that when $\E(L \log L) < \infty$, a discrete time Galton-Watson process\footnote{That is, $T= 1$ a.s.} $(\#\mathcal{N}_n)$ grows at the same rate as $\E(\#\mathcal{N}_n)$. This result was then extended to continuous-time Galton-Watson processes\footnote{That is, $T$ is exponentially distributed.} by Athreya and Karlin \cite{AthKar}. Athreya \cite{Athreya} then showed that for a Bellman--Harris process, the convergence in \eqref{eqn:exponentialGrowth} holds in distribution, identifying the law of $Z_\infty$ as a fixed point of the smoothing transform. This result was extended by Doney \cite{Doney} to general CMJ branching processes. Next, Athreya and Kaplan \cite{AtK} proved the stronger result \eqref{eqn:exponentialGrowth} for Bellman--Harris processes, and Nerman \cite{Nerman} extended it to CMJ processes, under the optimal integrability conditions.

Refinements of these results have been made available under more generality. Indeed, observe that the process $(b_u, u \in \mathbbm{T})$ can be thought of as a branching random walk with non-decreasing paths. Additive martingales of branching random walks have been studied since at least \cite{Watanabe}, with optimal integrability conditions given by Biggins \cite{Biggins} (see also Lyons \cite{Lyons} for a simple proof) and Alsmeyer and Iksanov \cite{AlI}. The martingale $(Z_t)$ can then be seen as an additive martingale estimated along a stopping line, uniform integrability of the martingale along genealogical lines therefore extends along stopping lines \cite{BiK04}.

To study the asymptotic behaviour of the length of the longest edges in the Sevast'yanov process, we impose a regularity assumption on the tail of law of the lifetime $T$ of individuals. We assume that $x \mapsto \P(T >  \log x)$ is a regularly varying function at $\infty$. This assumption can be restated in a clearer fashion as follows: there exists $\beta \geq 0$ such that for all $a \in \R_+$, we have
\begin{equation}
  \label{eqn:regularTail}
\lim_{t \to \infty} \P(T > t+a|T > t) = \lim_{t \to \infty} \frac{\P(T > t + a)}{\P(T > t)}= e^{-\beta a}.
\end{equation}
Let us underscore that this condition includes lifetimes with heavy tail distributions. In particular, if $\P(T > t) \approx t^{-c}$, then \eqref{eqn:regularTail} holds with $\beta = 0$.

The characteristic length $\ell_t$ of the longest edges at time $t$ is defined by the formula
\begin{equation}
  \label{eqn:defLt}
  \ell_t = \inf\{ \ell > 0 : e^{-\alpha \ell} \P(T > \ell) \leq e^{-\alpha t}\}.
\end{equation}
The intuition for this being a good characteristic length choice is as follows. First observe that, due to the exponential growth of the process, branches of large length $\ell$ are most likely to be born around time $t - \ell$. Around this time, by \eqref{eqn:exponentialGrowth}, there are around $e^{\alpha (t-\ell)}$ newborn individuals, each of which having probability $\P(T > \ell)$ to create such a long edge. The characteristic length $\ell_t$ is then chosen in such a way that on average there are $O(1)$ such long branches of length $\ell_t$ at time $t$, whilst in contrast, we do not expect to find any branch significantly longer than the characteristic length $\ell_t$ at time $t$.
We prove in forthcoming Lemma~\ref{lem:linearGrowth} that $\ell_t$ grows linearly with $t$, specifically that
\begin{equation}
  \label{eqn:linearGrowthL}
  \lim_{t \to \infty} \frac{\ell_t}{t} = \frac{\alpha}{\alpha+\beta}.
\end{equation}

Our first main result is that the a.s. growth rate of the longest edges in $\T_t$ is identical to the growth rate of $\ell_t$ as $t \to \infty$.
\begin{theorem}
\label{thm:ps}
Under assumptions \eqref{eqn:supercritical}, \eqref{eqn:llogl} and \eqref{eqn:regularTail}, we have
\[
  \lim_{t \to \infty} \frac{M_t^\mathrm{p}}{t} =  \frac{\alpha}{\alpha + \beta} \quad \text{a.s. on the survival event}.
\]
In addition, if $\beta > 0$ then
\[
  \lim_{t \to \infty} \frac{M_t^\mathrm{i}}{t} = \frac{\alpha}{\alpha + \beta} \quad \text{a.s. on the survival event}.
\]
\end{theorem}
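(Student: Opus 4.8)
The plan is to establish matching upper and lower bounds on $M_t^{\mathrm p}$ and $M_t^{\mathrm i}$, all of linear order $\frac{\alpha}{\alpha+\beta}t$, exploiting the linear growth \eqref{eqn:linearGrowthL} of $\ell_t$ and the characterisation \eqref{eqn:defLt}. First I would prove the upper bound for $M_t^{\mathrm i}$ (which dominates $M_t^{\mathrm p}$ for the purpose of an upper bound, since any living long branch at time $t$ of length $m$ will, after being continued, contribute an interior branch of length at least $m$ at a later time — more carefully, one bounds both directly). Fix $\epsilon>0$. The expected number of individuals $v\in\T$ born after time $s$ whose parent was born before time $s$ is controlled by the martingale: $\E\!\left(\sum_{v} e^{-\alpha b_v}\ind{b_v>s\ge b_{\pi v}}\right)=\E(Z_s)=\E(Z_0)=\E(Le^{-\alpha T})=1$ by \eqref{eqn:malthusianParameter}. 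Summing a union bound over the event that some such newborn $v$ (born around time $r$) survives for a duration exceeding $(1+\epsilon)\ell_t$ (for $r\le t$), one gets, using \eqref{eqn:defLt} to convert $\P(T>(1+\epsilon)\ell_t)$ into roughly $e^{-\alpha(1+\epsilon)\ell_t+\alpha t}$ times a slowly varying correction, an expected count that is summable in $t$ along a geometric subsequence $t_n = n$ (or $t_n=\lambda^n$). Borel–Cantelli then gives $\limsup_t M_t^{\mathrm i}/\ell_t \le 1+\epsilon$ a.s., and monotonicity/continuity of $\ell_t$ fills the gaps between subsequence points; combined with \eqref{eqn:linearGrowthL} this yields $\limsup_t M_t^{\mathrm i}/t \le \frac{\alpha}{\alpha+\beta}$, and the same for $M_t^{\mathrm p}$.

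For the lower bound, I would work on the survival event and use the exponential growth \eqref{eqn:exponentialGrowth}: at a time $r$ that is a large fixed constant, there are, with high probability on survival, at least $c\,e^{\alpha r}$ individuals in the coming generation, i.e.\ at least that many newborns shortly after $r$; more usefully, iterating, at time $s=(1-\delta)t$ there are at least $e^{\alpha(1-\delta)t(1-\eta)}$ newborn individuals (for small $\eta$ coming from the fluctuations of $Z_\infty$ and the convergence rate). Each such newborn independently has probability $\P(T>(1-\epsilon)\ell_t)\approx e^{-\alpha(1-\epsilon)\ell_t+\alpha t}\times(\text{slowly varying})$, which — after using \eqref{eqn:linearGrowthL} to see $(1-\epsilon)\ell_t\approx(1-\epsilon)\frac{\alpha}{\alpha+\beta}t$ and comparing exponents — is $\gg e^{-\alpha(1-\delta)t}$ once $\delta$ is chosen small relative to $\epsilon$. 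Hence the expected number of newborns around time $s$ with lifetime $>(1-\epsilon)\ell_t$ tends to infinity. To turn first-moment divergence into an almost-sure statement I would use a second-moment / independence argument: conditionally on the configuration at time $s$, the long-branch indicators of the distinct newborns are independent, so a Paley–Zygmund or direct Chebyshev bound shows that with probability tending to $1$ at least one such branch exists; for the a.s.\ statement, restrict to a geometric subsequence $t_n$ and a suitable filtration so the events are (conditionally) independent enough to apply a conditional Borel–Cantelli (Lévy's extension). An individual alive at time $s+(1-\epsilon)\ell_t$ having been born at time $\approx s$ and still alive is a pendant branch of length $\ge(1-\epsilon)\ell_t$ at that time, giving the lower bound for $M_t^{\mathrm p}$; letting it die right after gives the interior bound, and here (and only here) is where $\beta>0$ is used: when $\beta=0$ the tail of $T$ is so heavy (regularly varying of index $0$) that the branch born at $s$ may simply still be alive at the later observation time, never contributing to $M^{\mathrm i}$ — so a separate, more delicate argument (or the restriction $\beta>0$) is needed for the interior edges.

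The main obstacle I expect is the lower bound's quantitative bookkeeping: one must simultaneously control (i) the rate of convergence in \eqref{eqn:exponentialGrowth} well enough to guarantee $\ge e^{\alpha s(1-\eta)}$ newborns at a time $s$ that itself depends on $t$, (ii) the slowly varying corrections hidden in \eqref{eqn:defLt} versus \eqref{eqn:regularTail}, ensuring the product (number of newborns)$\times$(long-branch probability) genuinely diverges, and (iii) the (conditional) independence structure needed to upgrade ``probability $\to1$'' to ``a.s.\ along a subsequence, hence a.s.\ for all $t$ by monotonicity.'' A clean way to organise (i) is to not use \eqref{eqn:exponentialGrowth} with $s$ depending on $t$ directly, but rather to fix a large time $s_0$, note $\#\mathcal N_{s_0}\ge K$ on an event of probability close to that of survival, and then run $K$ independent subtrees forward; the long branch we seek lives in at least one of them with overwhelming probability as $K\to\infty$. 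Matching the constants so that the exponent $-\alpha(1-\epsilon)\ell_t + \alpha s$ beats $-\alpha(t-s)$ — equivalently $s/t$ close enough to $1$ relative to how close $\epsilon$ is to $0$ — is the delicate point, and it is exactly here that \eqref{eqn:linearGrowthL} is indispensable: it tells us $(1-\epsilon)\ell_t/t \to (1-\epsilon)\frac{\alpha}{\alpha+\beta}<\frac{\alpha}{\alpha+\beta}$ strictly, leaving room to choose $s=(1-\delta)t$ with the inequality on exponents holding for all large $t$.
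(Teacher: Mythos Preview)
Your upper bound is essentially the paper's: a first-moment union bound $\P(M_t>\ell)\le \E(\#\T_{t-\ell})\P(T>\ell)$, an exponential estimate for each factor (using \eqref{eqn:typicalGrowthRate} and the fact that \eqref{eqn:regularTail} forces $\P(T>\ell)e^{\beta'\ell}\to0$ for every $\beta'<\beta$), Borel--Cantelli along integers, and monotonicity of $t\mapsto M_t$ to fill the gaps. One caution: your sentence converting $\P(T>(1+\epsilon)\ell_t)$ into ``roughly $e^{-\alpha(1+\epsilon)\ell_t+\alpha t}$'' is not literally what \eqref{eqn:defLt} gives, since $\epsilon\ell_t\to\infty$ and \eqref{eqn:regularTail} only compares $\P(T>t+a)$ to $\P(T>t)$ for \emph{fixed} $a$; the paper sidesteps this by using the cruder bound $\P(T>\ell)\le e^{-\beta'\ell}$ for any $\beta'<\beta$, which is what you actually need.

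Your lower bound takes a genuinely different route. The paper does \emph{not} run a direct first/second moment computation on long branches. Instead it first proves the finer Theorem~\ref{thm:main} (point-process convergence of $(\mathcal{E}^\mathrm{p}_t,\mathcal{E}^\mathrm{i}_t)$), and then uses it as a black box: Theorem~\ref{thm:main} yields $\P\bigl(M_t^{\mathrm i}\le t\tfrac{\alpha}{\alpha+\beta}(1-\delta)\bigr)\to\P(Z_\infty=0)<1$, hence this probability is $<\rho<1$ for large $t$. Then, with $\bar N_{\delta n}$ the number of individuals born in $[\delta n-1,\delta n]$, branching independence gives $\P\bigl(\{M_n^{\mathrm i}\le n\tfrac{\alpha}{\alpha+\beta}(1-\delta)^2\}\cap\{\bar N_{\delta n}\ge n\}\bigr)\le\rho^n$, Borel--Cantelli applies, and Nerman's theorem ensures $\bar N_{\delta n}\ge n$ eventually on survival. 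Your plan---exponential growth at time $s=(1-\delta)t$, conditional independence of long-branch indicators, Paley--Zygmund, then conditional Borel--Cantelli---is a valid self-contained alternative that avoids invoking Theorem~\ref{thm:main}, at the cost of the quantitative bookkeeping (your obstacles (i)--(iii)) that the paper's approach bypasses entirely. Note also that the paper's bootstrapping step (independent subtrees each failing with probability $\le\rho$, hence all failing with probability $\le\rho^n$) is exactly the clean version of the ``fix $s_0$, get $K$ subtrees'' idea you sketch at the end; so the two strategies converge once one has the input $\P(\text{no long branch})\le\rho<1$, and the real difference is only in how that input is obtained. Your identification of where $\beta>0$ enters for $M_t^{\mathrm i}$ is correct and matches the paper.
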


We believe that in a typical heavy tail scenario with $\beta = 0$, we would still have $\lim_{t \to \infty} \frac{M_t^\mathrm{i}}{t} = 1$ a.s. However, our method of proof would need to be adapted to treat this particular case. We postpone to Remark~\ref{rem:heavyTail} the discussion related to this particular case.

To explore in more details the length of the longest edges in $\T_t$ we now define the following point processes on $\R$,
\begin{equation}
  \label{eqn:defPP}
  \mathcal{E}^\mathrm{p}_t = \sum_{u \in \mathcal{N}_t} \delta_{(t-b_u) - \ell_t} \quad \text{ and } \quad \mathcal{E}^{\mathrm{i}}_t = \sum_{u \in \T_t\setminus\mathcal{N}_t} \delta_{(d_u-b_u) - \ell_t}.
\end{equation}
The point process $\mathcal{E}^\mathrm{p}_t$ allows us to study the length of all pendant edges relative to length $\ell_t$, while $\mathcal{E}^\mathrm{i}_t$ gives a convenient way to encode the length of interior edges compared to length $\ell_t$. In particular, we observe that the right-most atom in $\mathcal{E}^\mathrm{p}_t$ is located at $M_t^\mathrm{p} - \ell_t$, and the right-most atom in $\mathcal{E}^\mathrm{i}_t$ corresponds to $M_t^\mathrm{i} - \ell_t$.

The main result of the article can be expressed as follows.
\begin{theorem}
\label{thm:main}
Under assumptions \eqref{eqn:supercritical}, \eqref{eqn:llogl} and \eqref{eqn:regularTail}, $(\mathcal{E}^\mathrm{p}_t,\mathcal{E}^\mathrm{i}_t)$ jointly converge in distribution, for the topology of vague convergence of point processes, to $(\mathcal{E}^\mathrm{p}_\infty,\mathcal{E}^\mathrm{i}_\infty)$ whose law can be described as follows: conditionally on $Z_\infty$, $\mathcal{E}^\mathrm{p}_\infty$ and $\mathcal{E}^\mathrm{i}_\infty$ are independent Poisson point processes with intensity $c_\star Z_\infty \alpha e^{-(\alpha + \beta)x} \dd x$ and $c_\star Z_\infty \beta e^{-(\alpha + \beta)x} \dd x$ on $\R$ respectively, where
\begin{equation}
  \label{eqn:defCstar}
  c_\star = \E(\alpha T e^{-\alpha T} L)^{-1}.
\end{equation}
In addition, we have the convergence in distribution of the longest pendant and interior edges relative to $\ell_t$ given by:
\[
  \lim_{t \to \infty}
  (M_t^\mathrm{p} - \ell_t,M_t^\mathrm{i} - \ell_t)  =  (\max \mathcal{E}^\mathrm{p}_\infty,\max \mathcal{E}^\mathrm{i}_\infty),
\]
with the convention that the largest element of the empty point measure is $-\infty$.
\end{theorem}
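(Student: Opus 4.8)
The natural strategy is to compute, for each fixed $t$, the joint Laplace functional of $(\mathcal{E}^\mathrm{p}_t,\mathcal{E}^\mathrm{i}_t)$ conditionally on $\calF^\uparrow_s$ for a well-chosen intermediate time $s = s_t$ with $s_t \to \infty$ but $s_t/t \to 0$ (or more carefully $t - s_t \to \infty$ while $\ell_t - (\text{something}) \to \infty$), and then pass to the limit. The point is that a long pendant or interior edge present at time $t$ must have been created by an individual born after $s$, i.e.\ by the coming generation at time $s$, as made precise by the martingale decomposition $Z_s = \sum_{v \in \mathcal{N}_s} L_v e^{-\alpha d_v}$. Conditionally on $\calF^\uparrow_s$, the genealogical subtrees rooted at the children of individuals in $\mathcal{N}_s$ are independent copies of the original process, started at the (known) birth times $b_v$, $v$ in the coming generation. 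So the first step is: write $\mathcal{E}^\mathrm{p}_t$ and $\mathcal{E}^\mathrm{i}_t$ as sums over these i.i.d.\ subtrees, take conditional expectation, and reduce the computation of $\E\bigl(e^{-\langle \mathcal{E}^\mathrm{p}_t, f\rangle - \langle \mathcal{E}^\mathrm{i}_t, g\rangle} \mid \calF^\uparrow_s\bigr)$ to a product over $v$ of a single-ancestor quantity.

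The second step is a first-moment / Poissonisation estimate for one subtree. Fix a single ancestor born at time $r$ (with $t - r$ large), and let $\Phi^\mathrm{p}_{t-r}(f)$, $\Phi^\mathrm{i}_{t-r}(g)$ denote $\E\bigl(1 - e^{-\langle\cdot,f\rangle}\bigr)$ for the pendant, resp.\ interior, point process of lengths minus $\ell_t$ generated by that subtree. Using the many-to-one lemma for Sevast'yanov processes — equivalently, the renewal structure coming from $\E(Le^{-\alpha T})=1$ — the expected number of pendant edges of length $> \ell$ born from one ancestor at time $r$ behaves, for $r$ near $t - \ell_t$, like $c_\star e^{\alpha(t-r)} e^{-\alpha\ell}\P(T>\ell)$, and similarly the expected number of interior edges of length $>\ell$ is $\asymp c_\star e^{\alpha(t-r)} \beta \int_\ell^\infty e^{-\alpha s}\P(T>s)\,\dd s$ once one integrates the contribution of death times. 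Here the regular-variation hypothesis~\eqref{eqn:regularTail} is what converts $\P(T>\ell_t + x)/\P(T>\ell_t)$ into $e^{-\beta x}$ and, combined with the definition \eqref{eqn:defLt} of $\ell_t$ and Lemma~\ref{lem:linearGrowth}, produces the clean exponential intensities $c_\star Z_\infty \alpha e^{-(\alpha+\beta)x}\dd x$ and $c_\star Z_\infty \beta e^{-(\alpha+\beta)x}\dd x$. One then checks that each individual subtree contributes a vanishing amount (its expected number of long edges is $o(1)$ uniformly, since $e^{\alpha(t - b_v)}$ is tempered by $e^{-\alpha b_v}$ weighting and $b_v$ is spread over a window of length $\gg 1$), so that $-\log \E(e^{-\langle\mathcal{E}^\mathrm{p}_t,f\rangle - \langle\mathcal{E}^\mathrm{i}_t,g\rangle}\mid\calF^\uparrow_s) = \sum_v\bigl(\Phi^\mathrm{p}(f) + \Phi^\mathrm{i}(g)\bigr)(1 + o(1))$, and this sum converges, thanks to the martingale convergence $Z_s \to Z_\infty$ (which holds a.s.\ and in $L^1$ under \eqref{eqn:llogl}), to $Z_\infty c_\star\bigl(\int (1-e^{-f(x)})\alpha e^{-(\alpha+\beta)x}\dd x + \int(1-e^{-g(x)})\beta e^{-(\alpha+\beta)x}\dd x\bigr)$. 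This identifies the limit as the claimed conditionally-independent pair of Poisson point processes, and independence falls out of the product form.

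The third step handles the two limits $s \to \infty$ and then $t \to \infty$ carefully: one must show the $o(1)$ errors are genuinely negligible after summing over the $\asymp e^{\alpha s}$-many subtrees, and that nothing is lost by truncating at birth time $s$ — i.e.\ that edges (pendant or interior) whose lower endpoint lies at generation level $\le s$ cannot be long. For pendant edges this is immediate ($t - b_u \le t - $ nothing, but $b_u \le s$ forces length $\ge t - s$, which is \emph{larger} than $\ell_t$, so one must instead argue these are exponentially rare via a direct union bound on $\#\mathcal{N}_s \cdot \P(T > t - s)$, using that $t - s \gg \ell_t$ when $s$ is chosen with $t - s$ slightly bigger than $\ell_t$ — here \eqref{eqn:linearGrowthL} is used to ensure such an $s$ with $s \to \infty$ exists). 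A symmetric bound controls old interior edges. The standard Kallenberg/Laplace-functional criterion for vague convergence of point processes then upgrades convergence of joint Laplace functionals (against nonnegative continuous compactly supported $f,g$) to the asserted joint vague convergence. Finally, the statement about $(M_t^\mathrm{p} - \ell_t, M_t^\mathrm{i} - \ell_t)$ follows because $\max$ is a.s.\ continuous at the limiting Poisson configurations (the intensities have no atoms and put finite mass on $[x,\infty)$), so the right-most atom converges; the value $-\infty$ appears precisely on the (positive-probability, when the relevant intensity is integrable near $-\infty$… it is not, so actually $\max \mathcal{E}^\mathrm{p}_\infty > -\infty$ a.s.\ on $\{Z_\infty > 0\}$ and $=-\infty$ on extinction) event, matching the convention.

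I expect the \textbf{main obstacle} to be the uniform first-moment asymptotics in the second step: showing that $\E(\text{number of long edges from one ancestor born at } r)$ has the stated form \emph{uniformly in $r$ over the relevant window}, with errors that survive summation over $\asymp e^{\alpha s}$ subtrees. This requires a sufficiently strong renewal/many-to-one estimate together with a Potter-type bound from the regular variation \eqref{eqn:regularTail} to control $\P(T > \ell_t + x)/\P(T>\ell_t)$ \emph{uniformly} in $x$ on growing intervals, and it is also where the hypothesis $\beta > 0$ versus $\beta = 0$ becomes delicate for $\mathcal{E}^\mathrm{i}_\infty$ — when $\beta = 0$ the interior intensity formally vanishes, which is exactly the degeneracy flagged in Remark~\ref{rem:heavyTail}.
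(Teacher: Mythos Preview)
Your broad architecture---condition at an intermediate time, exploit conditional independence of subtrees, identify the Laplace functional, and separately bound the contribution of very old edges---is the right one, and your handling of old interior edges via a first-moment bound is exactly the content of Lemma~\ref{lem:goodApproximation}. But your choice of conditioning time is both internally inconsistent (you first write $s_t/t\to 0$, then ``$t-s$ slightly bigger than $\ell_t$'', which would force $s_t/t\to\beta/(\alpha+\beta)$) and, more importantly, it creates the very obstacle you flag at the end.

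The paper conditions not on $\mathcal{F}^\uparrow_s$ at an early $s$, but on the \emph{natural} filtration $\mathcal{F}_s$ at the \emph{late} time $s=c_t+A:=t-\ell_t+A$, where $[-A,\infty)$ contains the supports of the test functions $\phi,\psi$. The key observation is that every individual contributing to $\langle\mathcal{E}^\mathrm{p}_t,\phi\rangle$ or to $\langle\tilde{\mathcal{E}}^\mathrm{i}_t,\psi\rangle$ (the truncated interior process) must itself belong to $\mathcal{N}_{c_t+A}$: it is born before $c_t+A$ and still alive there. Hence the conditional Laplace transform is \emph{exactly} a product over $u\in\mathcal{N}_{c_t+A}$ of one-particle factors $1-g(c_t+A-b_u)$, where $g$ involves only the law of $T_u$ conditioned on $\{T_u>c_t+A-b_u\}$. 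No subtree decomposition, no many-to-one formula, no uniform renewal estimate, and no second-moment control is required. Taking logarithms yields a sum $\sum_{u\in\mathcal{N}_{c_t+A}}h(c_t+A-b_u)$, and Nerman's theorem on CMJ processes counted with a characteristic (Fact~\ref{fct:Nerman}) gives directly
\[
e^{-\alpha(c_t+A)}\sum_{u\in\mathcal{N}_{c_t+A}}h(c_t+A-b_u)\ \longrightarrow\ m^h\,Z_\infty\quad\text{in probability,}
\]
after which the regular-variation hypothesis~\eqref{eqn:regularTail} and an elementary Fubini computation turn $e^{\alpha A}m^h$ into the stated Poisson exponents.

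In short, the ``main obstacle'' you anticipate---uniform many-to-one asymptotics summed over $e^{\alpha s}$ subtrees, with Potter-type control on $\P(T>\ell_t+x)/\P(T>\ell_t)$---vanishes once you realise that the right conditioning time makes each particle in $\mathcal{N}_{c_t+A}$ a candidate long edge \emph{itself} rather than the root of a subtree of candidates. Your route could in principle be pushed through, but it would require the extra second-moment or rare-event estimate you allude to (to justify $\E(1-e^{-X})\sim\E(X)$ for each subtree's contribution $X$), whereas the paper's route needs nothing beyond Fact~\ref{fct:Nerman} and the approximation Lemma~\ref{lem:goodApproximation}.
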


\begin{remark}
We mention that proving the convergence of a family $(\mathcal{E}_t)$ of point processes on $\R$ towards $\mathcal{E}_\infty$ in distribution for the topology of vague convergence, as well as the convergence in distribution of $\max \mathcal{E}_t$ to $\max \mathcal{E}_\infty$, is equivalent to proving, for all $k \in \N$, the joint convergence in distribution of the positions of the first $k$ atoms of $\mathcal{E}_t$ to the position of the first $k$ atoms of $\mathcal{E}_\infty$. We refer to \cite[Lemma~4.4]{BBCM} for more details.
\end{remark}

\begin{remark}
By immediate Poisson computation, when considering the superposition of (the conditionally on $Z_\infty$) independent Poisson point processes $\mathcal{E}^\mathrm{i}_\infty$ and $\mathcal{E}^\mathrm{p}_\infty$, corresponding to the limiting point measure of \emph{all} edges of length around $\ell_t$ in $\T_t$, each atom of the combined point process will belong to $\mathcal{E}^\mathrm{p}_\infty$ with probability $\frac{\alpha}{\alpha + \beta}$, independently of its position. In particular, the longest edge in $\T_t$ will be pendant with probability converging to $\frac{\alpha}{\alpha + \beta}$ as $t \to \infty$.
\end{remark}

\begin{remark}
\label{rem:heavyTail}
Observe that in the previous theorem, if $\beta = 0$ then $\mathcal{E}^\mathrm{i}_t$ converges in law to the null point measure, and $\lim_{t \to \infty} M^\mathrm{p}_t- M^\mathrm{i}_t = \infty$ in probability. This is due to the fact that this situation corresponds to individual lifetimes with a tail that decays at a sub-exponential rate. Therefore, conditionally on $T > x$, one expects that $T \gg x$ with high probability, and the large majority of long edges in the process therefore correspond to individuals that are still alive at time $t$. Introducing more precise assumptions on the tail decay of $T$, such as $\P(T > t)$ is regularly varying at $\infty$, one could define, using similar methods as the ones we develop here, a function $\bar{\ell}_t$ such that $M_t^\mathrm{i}-\bar{\ell}_t$ converges in distribution towards a non-degenerate limit.
\end{remark}

We prove our main results in the next section.

\section{Proof of the main theorems}
\label{sec:pf}

This section is divided into three parts. We first provide in Section~\ref{subsec:UHconstruction} an explicit construction of the Sevast'yanov process using the Ulam--Harris--Neveu notation. We then give in Section~\ref{subsec:characLength} some properties on the characteristic length $\ell_t$ of the longest branches in $\T_t$. Finally, Section~\ref{subsec:pf} is dedicated to prove, first Theorem~\ref{thm:main}, then Theorem~\ref{thm:ps}.

\subsection{Construction of the Sevast'yanov process}
\label{subsec:UHconstruction}
We introduce here some of the notation and setup for our model.
The Sevast'yanov process can classically be constructed using the Ulam--Harris--Neveu notation as follows. Let $\mathcal{U} = \cup_{n \geq 0} \N^n$ the set of finite words over the alphabet $\N$. We let $\{(T_u,L_u), u \in \mathcal{U}\}$ be a family of i.i.d. copies of $(T,L)$. We then define
\[
  \T = \left\{u \in \mathcal{U} : \ \forall\ 1\leq   j \leq |u|,\  u(j) \leq L_{u_{j-1}}\right\},
\]
where given $u = (u(1),\ldots, u(n)) \in \N^n$, we write $|u|=n$ the length of $u$,  $u_k = (u(1),\ldots, u(k))$ the prefix consisting of the first $k \leq n$ letters of $u$, and $u_0=\emptyset$ with $|u_0|=0$. For $u \in \mathbbm{T}$, we write
\[
  b_u = \sum_{0 \leq j < |u|} T_{u_j}, \quad \text{and} \quad  d_u = b_u + T_u = \sum_{0 \leq j \leq |u|} T_{u_j}.
\]
In words, $\emptyset \in \N^0$ corresponds to the initial ancestor of the population, whose genealogical tree is given by $\mathbbm{T}$. The individual $u \in \mathbbm{T}$ is born at time $b_u$, and dies at time $d_u$.
An individual $u \in \N^n$ is from the $n$th generation and is identified as the $u(n)$th child of the $u(n-1)$th child of ... of the $u(1)$th child of the initial ancestor of the population.

For all $t \geq 0$, we write
\[
  \T_t = \{u \in \T : b_u \leq t\} \quad \text{and} \quad \mathcal{N}_t = \{u \in \T : b_u \leq t < d_u\}
\]
which are, respectively, the set of individuals born before time $t$ and the set of individuals alive at time $t$. The length of the branch associated to individual $u \in \T_t$ is then defined as $\min(t,d_u) - b_u$, which is consistent with the point processes defined in \eqref{eqn:defPP}.

\subsection{Properties of the characteristic length of the longest edges}
\label{subsec:characLength}

The definition of the characteristic length $\ell_t$ in \eqref{eqn:defLt} can be explained informally as follows. The population in the Sevast'yanov process growing at exponential rate \ref{eqn:exponentialGrowth}, the large majority of branches of length $\ell$ come from individuals that were born around time $t-\ell$. Since there are approximately $e^{\alpha (t - \ell)}$ individuals born around this time, and that each of them have probability $\P(T > \ell)$ to give birth to a branch of length at least $\ell$, we conclude that there will be branches of length $\ell$ in $\T_t$ if $e^{\alpha(t-\ell)} \gg 1$, while no branches will exist if $e^{\alpha(t-\ell)}\P(T > \ell) \ll 1$.

We show in the next lemma that the function $\ell_t$ defined above corresponds to the critical situation.
\begin{lemma}
\label{lem:asymptoticLt}
If $\P(T>0) =1$ and \eqref{eqn:regularTail} holds, then the function $\ell$ defined in \eqref{eqn:defLt} verifies
\[
  \lim_{t \to \infty} e^{\alpha(t-\ell_t)} \P(T > \ell_t) = 1.
\]
\end{lemma}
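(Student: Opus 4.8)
The plan is to study the function $g(\ell):=e^{-\alpha\ell}\P(T>\ell)$ that appears in the definition \eqref{eqn:defLt}, so that $\ell_t=\inf\{\ell>0:g(\ell)\le e^{-\alpha t}\}$. First I would record its elementary properties. Since $\P(T=0)=0$ we have $g(0)=\P(T>0)=1$; since the conditional probabilities in \eqref{eqn:regularTail} must make sense for all large $t$, necessarily $\P(T>t)>0$ for every $t\ge 0$, so $g$ is strictly positive, strictly decreasing, and right-continuous (because $\ell\mapsto\P(T>\ell)$ is), with $g(\ell)\to 0$ as $\ell\to\infty$ (as $\P(T=\infty)=0$). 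Its left limits are $g(\ell-)=e^{-\alpha\ell}\P(T\ge\ell)$. By right-continuity of $g$ and the definition of the infimum, $\ell_t$ itself belongs to $\{\ell>0:g(\ell)\le e^{-\alpha t}\}$ (and is $>0$ since $g(0)=1>e^{-\alpha t}$), while every $\ell<\ell_t$ satisfies $g(\ell)>e^{-\alpha t}$; letting $\ell\uparrow\ell_t$ in the latter gives $g(\ell_t-)\ge e^{-\alpha t}$. Together this produces the two-sided bound
\[
  e^{-\alpha\ell_t}\,\P(T>\ell_t)\ \le\ e^{-\alpha t}\ \le\ e^{-\alpha\ell_t}\,\P(T\ge\ell_t),
\]
equivalently, after multiplying through by $e^{\alpha t}$,
\[
  \frac{\P(T>\ell_t)}{\P(T\ge\ell_t)}\ \le\ e^{\alpha(t-\ell_t)}\P(T>\ell_t)\ \le\ 1 .
\]

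Next I would note that $\ell_t\to\infty$ as $t\to\infty$: for any fixed $L$ we have $g(L)>0$, hence $e^{-\alpha t}<g(L)$ for $t$ large, and then $\ell_t\ge L$ by monotonicity of $g$. So it remains to prove the ratio $\P(T>x)/\P(T\ge x)$ tends to $1$ as $x\to\infty$, for then the displayed squeeze gives $e^{\alpha(t-\ell_t)}\P(T>\ell_t)\to 1$, which is the claim.

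To control that ratio I would write $\P(T=x)/\P(T\ge x)=1-\P(T>x)/\P(T\ge x)$, use $\P(T\ge x)\le\P(T>x-\epsilon)$ for any $\epsilon>0$, and apply \eqref{eqn:regularTail} with $t=x-\epsilon$ and $a=\epsilon$ to obtain $\P(T>x)/\P(T>x-\epsilon)\to e^{-\beta\epsilon}$ as $x\to\infty$. This yields $\limsup_{x\to\infty}\P(T=x)/\P(T\ge x)\le 1-e^{-\beta\epsilon}$, and letting $\epsilon\downarrow 0$ forces this $\limsup$ to be $0$, i.e. $\P(T>x)/\P(T\ge x)\to 1$.

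The only genuine subtlety here is that $T$ may carry atoms, so $g$ need not be continuous and one cannot simply assert $g(\ell_t)=e^{-\alpha t}$. This is exactly why the argument keeps track of the left limit $g(\ell_t-)=e^{-\alpha\ell_t}\P(T\ge\ell_t)$, and why the tail‑regularity hypothesis \eqref{eqn:regularTail} is invoked precisely to show that the (possible) jump of $g$ at $\ell_t$ is asymptotically negligible. Everything else is routine monotonicity/continuity bookkeeping.
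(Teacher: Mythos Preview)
Your proof is correct and follows essentially the same route as the paper: sandwich $e^{-\alpha t}$ between $e^{-\alpha\ell_t}\P(T>\ell_t)$ and $e^{-\alpha\ell_t}\P(T\ge\ell_t)$ via the defining property of the generalized inverse, then use \eqref{eqn:regularTail} to show $\P(T\ge x)/\P(T>x)\to 1$. You simply spell out more carefully the details the paper leaves implicit (that $\ell_t\to\infty$, and the $\epsilon$-argument for the ratio).
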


\begin{proof}
We observe that $f : \ell \mapsto e^{-\alpha \ell} \P(T > \ell)$ is a c\`adl\`ag decreasing function on $[0,\infty)$ with $f(0)=1$ and $\lim_{\ell \to \infty} f(\ell) = 0$. Writing $f^{-1}$ its right-continuous generalised inverse, we have
\[
  \ell_t = f^{-1}(e^{-\alpha t}) \text{ for all } t > 0.
\]

In particular, we observe that
\[
  e^{-\alpha \ell_t} \P(T > \ell_t) \leq e^{-\alpha t} \leq e^{-\alpha \ell_t} \P(T \geq \ell_t).
\]
Using \eqref{eqn:regularTail}, we have
\begin{equation*}
  \lim_{t \to \infty} \frac{\P(T \geq t)}{\P(T > t)} = 1,
\end{equation*}
which allows us to complete the proof.
\end{proof}

We now remark that under assumption \eqref{eqn:regularTail}, $\ell_t$ grows at a linear rate.
\begin{lemma}
\label{lem:linearGrowth}
Under assumption \eqref{eqn:regularTail}, we have $\lim_{t \to \infty} \frac{\ell_t}{t} = \frac{\alpha}{\alpha + \beta}$.
\end{lemma}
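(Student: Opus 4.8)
The plan is to reduce the statement to the elementary asymptotics of the tail of $T$ implied by \eqref{eqn:regularTail}, and then to invert the defining relation of $\ell_t$.

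\textbf{Step 1: tail asymptotics.} First I would observe that \eqref{eqn:regularTail} says precisely that $x \mapsto \P(T > \log x)$ is regularly varying of index $-\beta$ at infinity: indeed $\P(T > \log(\lambda x))/\P(T > \log x) = \P(T > \log x + \log\lambda)/\P(T > \log x) \to e^{-\beta\log\lambda} = \lambda^{-\beta}$. Since any regularly varying function $h$ of index $\rho$ satisfies $\log h(x)/\log x \to \rho$ (via Potter's bounds on its slowly varying part), writing $x = e^s$ gives
\[
  \lim_{s\to\infty} \frac{\log \P(T > s)}{s} = -\beta .
\]
The sandwich $\P(T > s) \le \P(T \ge s) \le \P(T > s-1)$ then yields the same limit with $\P(T \ge s)$ in place of $\P(T > s)$. (If one prefers to avoid the regular-variation toolbox: taking $a=1$ in \eqref{eqn:regularTail} gives $\log\P(T > s+1) - \log\P(T>s) \to -\beta$, a Cesàro summation yields $\log\P(T > n)/n \to -\beta$ along integers, and monotonicity of the tail sandwiches $\log\P(T>s)/s$ between $\log\P(T>\lfloor s\rfloor+1)/s$ and $\log\P(T>\lfloor s\rfloor)/s$, both of which converge to $-\beta$. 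All logarithms are legitimate since \eqref{eqn:regularTail} tacitly forces $\P(T>s)>0$ for every $s$.)

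\textbf{Step 2: inverting the definition.} Arguing exactly as in the proof of Lemma~\ref{lem:asymptoticLt}, with $f(\ell) = e^{-\alpha\ell}\P(T>\ell)$ (càdlàg, nonincreasing, $f(0)=\P(T>0)>0$, $f(\ell)\to 0$) and $\ell_t = f^{-1}(e^{-\alpha t})$ for the right-continuous generalised inverse, one has
\[
  e^{-\alpha\ell_t}\P(T > \ell_t) \;\le\; e^{-\alpha t} \;\le\; e^{-\alpha\ell_t}\P(T \ge \ell_t),
\]
and $\ell_t \to \infty$ as $t \to \infty$ because $f$ is positive with $f(\ell)\to 0$. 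Taking logarithms, rearranging, and dividing by $\ell_t$ gives
\[
  \alpha - \frac{\log\P(T \ge \ell_t)}{\ell_t} \;\le\; \alpha\,\frac{t}{\ell_t} \;\le\; \alpha - \frac{\log\P(T > \ell_t)}{\ell_t}.
\]
By Step 1, both extremes converge to $\alpha + \beta$, hence $\alpha\, t/\ell_t \to \alpha + \beta$, i.e. $\ell_t/t \to \alpha/(\alpha+\beta)$, which is the claim.

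\textbf{Expected obstacle.} There is no genuine obstacle here; this is a short technical lemma. The only point deserving a line of care is the passage from the unit-shift statement in \eqref{eqn:regularTail} to the continuous asymptotics of Step~1 — routine given monotonicity of the tail — together with the degenerate case $\beta = 0$, where one cannot normalise by $\beta$, but the squeeze in Step~2 still goes through verbatim since the two normalised log-tail terms then simply tend to $0$. Invoking the uniform-convergence or representation theorems for regularly varying functions makes Step~1 immediate.
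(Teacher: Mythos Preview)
Your proof is correct. Both your argument and the paper's hinge on the same underlying fact --- the regular variation of $x\mapsto \P(T>\log x)$ with index $-\beta$ --- but you exploit it differently. The paper substitutes $\phi(t)=\exp(\ell_{\alpha^{-1}\log t})$, recognises $\phi$ as the generalised inverse of the regularly varying function $x\mapsto x^{\alpha}/\P(T>\log x)$, and then invokes the inverse theorem for regular variation (\cite[Theorem~1.5.12]{BGT}) to conclude $\log\phi(t)/\log t\to 1/(\alpha+\beta)$. You instead extract directly the scalar consequence $\log\P(T>s)/s\to-\beta$ and then squeeze $\alpha t/\ell_t$ using the very same two-sided inequality that appears in Lemma~\ref{lem:asymptoticLt}. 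Your route is a bit more self-contained --- particularly with the Ces\`aro alternative you sketch, which avoids citing any regular-variation theorem at all --- whereas the paper's route is more structural and makes the reason for the exponent $\alpha/(\alpha+\beta)$ transparent as an inversion of indices.
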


\begin{proof}
We write $\phi(t) = \exp\left(\ell_{\alpha^{-1} \log t}\right)$, we observe that
\[
  \phi(t) = \inf\{x > 0 : x^{-\alpha} \P(T > \log x) \leq 1/t \},
\]
therefore $\phi$ is the right-continuous inverse of $x \mapsto \frac{x^{\alpha}}{\P(T > \log x)}$, which is a regularly varying function at $\infty$ with index $\alpha + \beta$ (see e.g. \cite[Theorem 1.5.12]{BGT}). As a result, we deduce that $\phi(t)$ is regularly varying at $\infty$ with index $\frac{1}{\alpha + \beta} > 0$. In particular, this yields
\[
  \frac{1}{\alpha + \beta} = \lim_{t \to \infty} \frac{\log \phi(t)}{\log t} = \lim_{t \to \infty} \frac{\ell_{\alpha^{-1}\log t}}{\log t},
\]
which completes the proof.
\end{proof}

\begin{remark}
\label{rem:expTail}
As a slight refinement of Lemma~\ref{lem:linearGrowth}, let us mention that if $T$ has an exponential tail, i.e. there exist $c,\beta > 0$ such that $\P(T > t) \sim c e^{-\beta t}$, then $\ell_t = \frac{\alpha t + \log c}{\alpha + \beta} + o(1)$ as $t \to \infty$. In particular, if $T$ and $L$ are independent with $T$ exponentially distributed with parameter $\beta$ , then $(\mathcal{N}_t, t \geq 0)$ is a continuous-time Galton-Watson tree. In this case, the Malthusian exponent is given by $\beta (\E(L) - 1)$, and we obtain that the length of the longest edges in that tree are of order
\[
  \ell_t = \left( 1 - \frac{1}{\E(L)}\right) t + o(t) \quad \text{as $t \to \infty$,}
\]
in accordance with the results of Bocharov and Harris \cite{BoHa}.
\end{remark}

\subsection{Proof of the main theorems}
\label{subsec:pf}

We prove in this section Theorems~\ref{thm:ps} and \ref{thm:main}. We begin by the proof of this second theorem, that will be used to the first one as a consequence.

The proof of Theorem~\ref{thm:main} relies on the study of the joint asymptotic behaviour of $\mathcal{E}_t^\mathrm{p}$ and ${\mathcal{E}}_t^\mathrm{i}$, the latter of which we approximate by
\[
  \tilde{\mathcal{E}}_t^\mathrm{i} :=
   \sum_{u \in \T_t\setminus\mathcal{N}_t} \delta_{(d_u-b_u) - \ell_t}\,\ind{b_u > t-3\ell_t/2}
    = \sum_{u \in \mathbbm{T}} \ind{d_u < t,\,b_u > t-3\ell_t/2}\delta_{(d_u - b_u) - \ell_t}.
\]
This modified extremal process counts the long interal edges born after time $t - 3 \ell_t/2$. As a result, we can guarantee that each edge counted in $\mathcal{E}_t^\mathrm{p}(\R_+)$ or $\tilde{\mathcal{E}}_t^{\mathrm{i}}(\R_+)$ exists at time $t - \ell_t$, which will allow us to use the spine decomposition at that time.

The main result of the section is the following convergence of the joint Laplace transform of $\mathcal{E}^\mathrm{p}$ and $\tilde{\mathcal{E}}^\mathrm{i}$, proving their convergence to randomly shifted Poisson point processes.

\begin{proposition}
\label{prop:cv}
Let $A > 0$ and $\phi,\psi$ two continuous non-negative bounded functions with support in $[-A,\infty)$, we have
\begin{multline*}
  \lim_{t \to \infty} \E\left( \exp\left( -\crochet{\mathcal{E}^\mathrm{p}_t,\phi} - \crochet{\tilde{\mathcal{E}}^\mathrm{i}_t, \psi} \right)\right)\\
  = \E\left( \exp\left( - c_\star Z_\infty \int_{\R} \left\{ \alpha (1 - e^{-\phi(x)}) + \beta (1 - e^{-\psi(x)})\right\}e^{-(\alpha + \beta)x} \dd x \right) \right).
\end{multline*}
\end{proposition}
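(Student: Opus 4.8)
The plan is to isolate, through the branching structure, the long edges present at time $t$ as a superposition of contributions from a large family of conditionally independent subtrees, each born close to the optimal time $t-\ell_t$ and each individually very unlikely to carry a long edge; this ``null array'' structure is what forces a (randomly rescaled) Poisson limit. Concretely, I would fix an auxiliary parameter $r>0$, set $s=s_t:=t-\ell_t-r$, and condition on $\mathcal{F}^\uparrow_s$; by Lemmas~\ref{lem:asymptoticLt} and~\ref{lem:linearGrowth} one has $s\to\infty$ and $t-s=\ell_t+r\to\infty$. Writing $\mathcal{C}_s=\{v\in\T:\,b_{\pi v}<s\le b_v\}$ for the coming generation at $s$, conditionally on $\mathcal{F}^\uparrow_s$ the birth times $(b_v)_{v\in\mathcal{C}_s}$ are known and the subtrees $\T^{(v)}$, $v\in\mathcal{C}_s$, are independent Sevast'yanov processes, the $v$-th one rooted at time $b_v\ge s$. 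The truncation $b_u>t-3\ell_t/2$ in the definition of $\tilde{\mathcal{E}}^\mathrm{i}_t$ is exactly what ensures that an interior carrier with $b_u\le s$ still has $d_u>s$; consequently every individual carrying an atom of $\mathcal{E}^\mathrm{p}_t$ or $\tilde{\mathcal{E}}^\mathrm{i}_t$ at a position $\ge-A$ lies either in $\mathcal{N}_s$ or in exactly one subtree $\T^{(v)}$, and the conditional Laplace transform factorizes as
\[
  \E\!\left[e^{-\crochet{\mathcal{E}^\mathrm{p}_t,\phi}-\crochet{\tilde{\mathcal{E}}^\mathrm{i}_t,\psi}}\,\middle|\,\mathcal{F}^\uparrow_s\right]
  =e^{-\crochet{\mathcal{A}^\mathrm{p}_t,\phi}-\crochet{\mathcal{A}^\mathrm{i}_t,\psi}}\prod_{v\in\mathcal{C}_s}G_t(b_v),
\]
where $\mathcal{A}^\mathrm{p}_t,\mathcal{A}^\mathrm{i}_t$ collect the ($\mathcal{F}^\uparrow_s$-measurable) contributions of $\mathcal{N}_s$, and $G_t(\rho)$ is the Laplace transform of the pendant/truncated-interior edge processes of a single Sevast'yanov tree rooted at time $\rho$, observed at time $t$.

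Next I would establish two estimates. First, a many-to-one first-moment computation, based on Lemma~\ref{lem:asymptoticLt} (so $\P(T>\ell_t)\sim e^{-\alpha(t-\ell_t)}$) and the Malthusian-tilted renewal identity $\E[\sum_{u\in\T}e^{-\alpha b_u}h(b_u)]=\int_0^\infty e^{\alpha y}h(y)\,\widehat U(\dd y)$ — where $\widehat U$ is the renewal measure of the lifetime law tilted by $Le^{-\alpha T}$, whose density at infinity equals $(\E[\alpha Te^{-\alpha T}L])^{-1}=\alpha c_\star$ — shows that $\E[\crochet{\mathcal{A}^\mathrm{p}_t,\phi}+\crochet{\mathcal{A}^\mathrm{i}_t,\psi}]\le C(\phi,\psi,A)e^{-\alpha r}$ for all large $t$, so that this term vanishes once $r\to\infty$ and may be dropped. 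Second, for $G_t$ I would use that $G_t$ solves the branching fixed-point equation $G_t(\rho)=\E[e^{-\Theta_t(\rho,T)}G_t(\rho+T)^L]$ with $\Theta_t(\rho,\tau)=\phi((t-\rho)-\ell_t)\ind{\rho+\tau>t}+\psi(\tau-\ell_t)\ind{\rho+\tau<t,\ \rho>t-3\ell_t/2}$; equivalently, $1-G_t(\rho)=\E[1-e^{-\crochet{\hat{\mathcal{E}}^\mathrm{p}_\rho,\phi}-\crochet{\hat{\mathcal{E}}^\mathrm{i}_\rho,\psi}}]$, which by a Taylor expansion equals the single-tree edge intensity tested against $x\mapsto1-e^{-\phi(x)}$ and $x\mapsto1-e^{-\psi(x)}$, up to a pair-correlation remainder. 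Evaluating that intensity again by many-to-one and renewal, now also invoking the regular-variation hypothesis \eqref{eqn:regularTail} with Potter bounds for uniformity (so that $\P(T>\ell_t+x)/\P(T>\ell_t)\to e^{-\beta x}$ and $\P(T\in\ell_t+\dd x)/\P(T>\ell_t)\to\beta e^{-\beta x}\,\dd x$), gives a uniform estimate
\[
  1-G_t(\rho)=e^{-\alpha\rho}\bigl(H_t((t-\ell_t)-\rho)+o(1)\bigr),\qquad H_t\longrightarrow H,
\]
with $H$ bounded and $H(z)\to c_\star\int_\R\{\alpha(1-e^{-\phi(x)})+\beta(1-e^{-\psi(x)})\}e^{-(\alpha+\beta)x}\,\dd x$ as $z\to\infty$. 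In particular $\sup_{v\in\mathcal{C}_s}(1-G_t(b_v))\le e^{-\alpha s}\|H_t\|_\infty\to0$, so $\prod_{v\in\mathcal{C}_s}G_t(b_v)=\exp\bigl(-\sum_{v\in\mathcal{C}_s}(1-G_t(b_v))+o(1)\bigr)$.

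To evaluate $\sum_{v\in\mathcal{C}_s}(1-G_t(b_v))=\sum_{v\in\mathcal{C}_s}e^{-\alpha b_v}H_t\bigl(r-(b_v-s)\bigr)$, I would use $\sum_{v\in\mathcal{C}_s}e^{-\alpha b_v}=Z_s\to Z_\infty$ to replace $H_t$ by $H$, and then Nerman's theorem — the mechanism behind \eqref{eqn:exponentialGrowth} — applied to the deterministic characteristic $\chi_u(a)=\ind{a\ge0}\,e^{\alpha a}\,\E[Le^{-\alpha T}g(T-a)\ind{T>a}]$, which yields, for any bounded a.e.-continuous $g$, the law of large numbers $\sum_{v\in\mathcal{C}_s}e^{-\alpha b_v}g(b_v-s)\to Z_\infty\int_0^\infty g\,\dd\lambda$ with $\lambda(\dd z)=\alpha c_\star\,\E[Le^{-\alpha T}\ind{T>z}]\,\dd z$, a probability measure on $[0,\infty)$. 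Taking $g=H(r-\cdot)$ and using bounded convergence gives $\E[e^{-\crochet{\mathcal{E}^\mathrm{p}_t,\phi}-\crochet{\tilde{\mathcal{E}}^\mathrm{i}_t,\psi}}]\to\E[\exp(-Z_\infty\int_0^\infty H(r-z)\,\lambda(\dd z))]$ as $t\to\infty$ (with $r$ fixed). Since $H$ is bounded with $H(r-z)\to H(\infty)$ for each $z$ and $\lambda$ is a probability measure, letting $r\to\infty$ and applying bounded convergence once more produces exactly the claimed limit, whose product exponent also makes transparent the asserted conditional (given $Z_\infty$) independence of the two limiting Poisson processes.

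The step I expect to be the main obstacle is the single-tree estimate, and within it the control of the pair-correlation remainder in the Taylor expansion of $1-G_t(\rho)$: the naive many-to-two bound brings in a factor $\E[L^2e^{-\alpha T}]$, which is \emph{not} guaranteed by \eqref{eqn:llogl} alone. I would handle this by truncating the offspring number at a slowly growing level $K_t$, bounding the total (Malthusian-weighted) contribution of the rare prolific individuals using only $\E[L]<\infty$ together with the exponential growth, and then carrying out the expansion for the truncated tree. A secondary technical nuisance is to keep all estimates uniform through the nested limits ($t\to\infty$ against the renewal and Nerman limits, then $r\to\infty$) and to push the tail asymptotics of $T$ slightly beyond the fixed-increment regime of \eqref{eqn:regularTail}, which is where Potter's bounds for regularly varying functions are needed.
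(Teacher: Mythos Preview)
Your strategy—condition on the history up to some time near $t-\ell_t$, factorize over the resulting family, and invoke Nerman's law of large numbers—matches the paper's in its overall architecture, but the paper makes one simple change that eliminates essentially every technical difficulty you list.

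The paper conditions at time $c_t+A:=t-\ell_t+A$, i.e.\ \emph{after} the optimal birth window rather than before it, and on the \emph{natural} filtration $\mathcal{F}_{c_t+A}$ (so that for $u\in\mathcal{N}_{c_t+A}$ the lifetime $T_u$ is only known to exceed $c_t+A-b_u$). The point is that any individual contributing an atom in $[-A,\infty)$ to $\mathcal{E}^{\mathrm p}_t$ or $\tilde{\mathcal{E}}^{\mathrm i}_t$ must itself belong to $\mathcal{N}_{c_t+A}$, and no strict descendant of such an individual can contribute. Hence the conditional Laplace transform factorizes \emph{exactly} as
\[
\prod_{u\in\mathcal{N}_{c_t+A}}\bigl(1-g(c_t+A-b_u)\bigr),
\]
where each factor involves only the law of a \emph{single} lifetime $T$ conditioned on $\{T>x\}$:
\[
g(x)=(1-e^{-\phi(x-A)})\,\P\bigl(T>\ell_t+x-A\mid T>x\bigr)+\E\bigl[(1-e^{-\psi(T-\ell_t)})\ind{T\le \ell_t+x-A}\,\big|\,T>x\bigr].
\]
There are no subtrees to analyse, no Taylor expansion of $1-G_t(\rho)$, no pair-correlation remainder, no truncation of $L$, and no auxiliary parameter $r\to\infty$. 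Assumption~\eqref{eqn:regularTail} gives $g(x)\sim h(x)\,\P(T>\ell_t)\sim h(x)\,e^{-\alpha c_t}$ for an explicit bounded $h$, after which a single application of Fact~\ref{fct:Nerman} to the characteristic $h$ and dominated convergence finish the proof; the final integral identity for $e^{\alpha A}m^h$ is a direct computation.

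Your route can in principle be completed, but the obstacle you isolate is real: obtaining the matching upper bound on $G_t(\rho)$ (equivalently, a lower bound on $1-G_t(\rho)$ that is asymptotically sharp) under only \eqref{eqn:supercritical} and \eqref{eqn:llogl} genuinely requires controlling pairs, and the truncation you sketch would itself need careful uniformity over $\rho\ge s$ and through the nested limits $t\to\infty$, $r\to\infty$. All of this is bypassed by moving the conditioning time from $t-\ell_t-r$ to $t-\ell_t+A$, which collapses each ``subtree'' to a single Bernoulli-like contribution.
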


Before turning to the proof of this proposition, we first show that $\tilde{\mathcal{E}}^\mathrm{i}_t$ is a good approximation of $\mathcal{E}^\mathrm{i}_t$, more specifically that ${\mathcal{E}}^\mathrm{i}_t - \tilde{\mathcal{E}}^\mathrm{i}_t \to 0$ in probability for the topology of vague convergence.

\begin{lemma}
\label{lem:goodApproximation}
For all $A > 0$, we have
\[
  \lim_{t \to \infty} \mathcal{E}^\mathrm{i}_t([-A,\infty)) - \tilde{\mathcal{E}}^\mathrm{i}_t([-A,\infty)) = 0
\]
in probability.
\end{lemma}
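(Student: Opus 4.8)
The plan is to show that the difference $\mathcal{E}^\mathrm{i}_t - \tilde{\mathcal{E}}^\mathrm{i}_t$ counts only long interior edges born \emph{before} time $t - 3\ell_t/2$, and that the expected number of such edges tends to $0$. Since $\mathcal{E}^\mathrm{i}_t([-A,\infty)) - \tilde{\mathcal{E}}^\mathrm{i}_t([-A,\infty))$ is a non-negative integer-valued random variable, convergence to $0$ in probability follows from convergence of its expectation to $0$ by Markov's inequality. So the whole argument reduces to a first-moment (many-to-one) computation.

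**First**, I would write the quantity explicitly:
\[
  \mathcal{E}^\mathrm{i}_t([-A,\infty)) - \tilde{\mathcal{E}}^\mathrm{i}_t([-A,\infty)) = \sum_{u \in \mathbbm{T}} \ind{d_u < t,\ b_u \leq t - 3\ell_t/2,\ d_u - b_u \geq \ell_t - A}.
\]
Taking expectations and using the many-to-one lemma (equivalently, decomposing over generations and using that the $(T_{u_j}, L_{u_j})$ are i.i.d.), the expected number of individuals ever born equals $\sum_{n \geq 0}\E(L)^n$ applied along a spine — more precisely, $\E\bigl(\sum_{u \in \mathbbm{T}} g(b_u, T_u)\bigr) = \int_0^\infty \E\bigl(L\, g(s, T)\,; \cdot \bigr)$-type expressions via the renewal structure of $(b_u)$. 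Concretely, letting $U$ denote the intensity measure $U(\dd s) = \E\bigl(\sum_{u \in \mathbbm{T}} \ind{b_u \in \dd s}\bigr)$, which by the renewal theorem (or directly, since the mean number of children is $\E(L) > 1$) satisfies $U(\dd s) \leq C e^{\alpha s}\dd s$ for some constant $C$, I get
\[
  \E\bigl(\mathcal{E}^\mathrm{i}_t([-A,\infty)) - \tilde{\mathcal{E}}^\mathrm{i}_t([-A,\infty))\bigr) = \int_0^{t - 3\ell_t/2} \P\bigl(T \geq (\ell_t - A) \vee (t - s)\bigr)\, \E(L)\, U(\dd s).
\]
Here the constraint $d_u < t$ forces $T \leq t - s$ and the constraint $d_u - b_u \geq \ell_t - A$ forces $T \geq \ell_t - A$; since $s \leq t - 3\ell_t/2$ we have $t - s \geq 3\ell_t/2 > \ell_t - A$ for $t$ large, so the effective constraint is just $T \geq t - s \geq 3\ell_t/2$.

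**Then** I bound this integral. Using $U(\dd s) \leq C e^{\alpha s}\dd s$, substituting $r = t - s \geq 3\ell_t/2$, and writing $\P(T \geq r) \leq e^{\alpha \ell_r} e^{-\alpha t}$-style bounds coming from the definition \eqref{eqn:defLt} of $\ell_t$ — or more cleanly, noting from Lemma~\ref{lem:asymptoticLt} that $e^{\alpha(t - \ell_t)}\P(T > \ell_t) \to 1$ — I obtain
\[
  \E\bigl(\mathcal{E}^\mathrm{i}_t([-A,\infty)) - \tilde{\mathcal{E}}^\mathrm{i}_t([-A,\infty))\bigr) \leq C' e^{\alpha t} \int_{3\ell_t/2}^\infty e^{-\alpha r}\P(T \geq r)\,\dd r .
\]
The point is that $e^{-\alpha r}\P(T \geq r) = f(r)$ is (up to the càdlàg left/right issues) the function from Lemma~\ref{lem:asymptoticLt}, with $f(\ell_t) \approx e^{-\alpha t}$; the regular variation \eqref{eqn:regularTail} gives $\P(T \geq r) \leq C'' e^{-(\beta - \epsilon) r}$ for $r$ large (or polynomial decay, handled the same way when $\beta = 0$ by using that $\ell_t / t \to \alpha/(\alpha+\beta) = 1$), so $f(r) \leq C'' e^{-(\alpha + \beta - \epsilon) r}$ and the integral over $[3\ell_t/2, \infty)$ is $O\bigl(e^{-(\alpha+\beta-\epsilon)\cdot 3\ell_t/2}\bigr) = O\bigl(f(\ell_t)^{3/2 + o(1)}\bigr) = O\bigl(e^{-3\alpha t/2 + o(t)}\bigr)$. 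Multiplying by $e^{\alpha t}$ still leaves something of order $e^{-\alpha t/2 + o(t)} \to 0$. The margin $3\ell_t/2$ versus $\ell_t$ is exactly what makes the exponent strictly negative.

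**The main obstacle** I expect is handling the $\beta = 0$ (sub-exponential tail) case cleanly within the same estimate, since there $\P(T \geq r)$ need not decay exponentially and the bound $f(r) \leq e^{-(\alpha + \beta - \epsilon)r}$ is not quite available; one must instead exploit regular variation of $\ell_t$ directly — that $\ell_{t'} \sim t'$ as $t' \to \infty$ when $\beta = 0$, so that $f(3\ell_t/2) = e^{-\alpha \cdot 3\ell_t/2}\P(T \geq 3\ell_t/2)$ and, writing $3\ell_t/2$ as $\ell_{t'}$ for the appropriate $t' = t'(t)$ with $t' \sim 3t/2$, conclude $f(3\ell_t/2) \approx e^{-\alpha t'} = e^{-3\alpha t/2 + o(t)}$ — again beating the $e^{\alpha t}$ factor. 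A secondary technical point is justifying the bound $U(\dd s) \leq C e^{\alpha s}\dd s$ on the renewal measure of the (possibly defective-looking but actually supercritical) branching random walk $(b_u)$; this is standard — e.g. it follows from $\E\bigl(\sum_{|u|=n} e^{-\alpha b_u}\bigr) = \E(Le^{-\alpha T})^n = 1$ together with a Markov-type argument, or directly from Nerman's renewal theory — so I would state it as a lemma or cite the relevant literature rather than reprove it.
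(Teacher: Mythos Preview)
Your overall strategy is exactly the paper's: the difference is a nonnegative integer, so by Markov's inequality it suffices to show its expectation tends to~$0$, and this is a first-moment computation using independence of $T_u$ from $b_u$.

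However, there is a concrete error in setting up the integral. From $d_u < t$ you get $T_u < t - b_u$, an \emph{upper} bound on $T_u$; together with $T_u \geq \ell_t - A$ the event is $\{\ell_t - A \leq T_u < t - b_u\}$. Your sentence ``the effective constraint is just $T \geq t - s$'' has the inequality reversed: the set $\{T \geq t - s\}$ is \emph{disjoint} from the event you are trying to bound, so your displayed expectation formula with $\P\bigl(T \geq (\ell_t - A)\vee(t-s)\bigr)$ is not an upper bound for the quantity of interest. Everything downstream (the integral $\int_{3\ell_t/2}^\infty e^{-\alpha r}\P(T\geq r)\,\dd r$, the worry about $\beta=0$, the Potter-type bounds) rests on this wrong constraint.

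The fix is much simpler than your route. Drop the constraint $T_u < t - b_u$ altogether (this only increases the count), so that the expectation factors cleanly as
\[
\E(\#\mathbbm{T}_{t-3\ell_t/2})\cdot \P(T \geq \ell_t - A).
\]
Now $\E(\#\mathbbm{T}_s) \leq C e^{\alpha s}$ (this is \eqref{eqn:typicalGrowthRate}, essentially your renewal bound on $U$), while Lemma~\ref{lem:asymptoticLt} and \eqref{eqn:regularTail} give $\P(T \geq \ell_t - A) \sim e^{\beta A} e^{-\alpha(t-\ell_t)}$. The product is $O\bigl(e^{\alpha(t - 3\ell_t/2)}\cdot e^{-\alpha(t-\ell_t)}\bigr) = O\bigl(e^{-\alpha \ell_t/2}\bigr) \to 0$, uniformly in $\beta \geq 0$. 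No case distinction for $\beta = 0$ is needed, and the integral over $r$ never appears. (A minor side remark: the extra factor $\E(L)$ in your integral is spurious, since $U(\dd s)$ as you defined it already counts all individuals ever born.)
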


\begin{proof}
We first remark that $\mathcal{E}^\mathrm{i}_t([-A,\infty)) - \tilde{\mathcal{E}}^\mathrm{i}_t([-A,\infty)) \geq 0$ a.s. Therefore, to complete the proof, we only have to show that
\[
  \lim_{t \to \infty} \E(\mathcal{E}^\mathrm{i}_t([-A,\infty)) - \tilde{\mathcal{E}}^\mathrm{i}_t([-A,\infty))) = 0,
\]
using the Markov inequality.

Using that $T_u = d_u-b_u$ is independent of $b_u$, we observe that
\begin{align*}
  \E(\mathcal{E}^\mathrm{i}_t([-A,\infty)) - \tilde{\mathcal{E}}^\mathrm{i}_t([-A,\infty)))
  &= \E\left( \sum_{u \in \mathbbm{T}} \ind{d_u - b_u - \ell_t \geq -A, \, b_u \leq t - 3\ell_t/2}  \right)\\
  &= \E\left( \sum_{u \in \mathbbm{T}} \ind{T_u \geq \ell_t -A, \, b_u \leq t - 3\ell_t/2}  \right)\\
  &= \E\left( \# \mathbbm{T}_{t-3\ell_t/2} \right) \P(T \geq \ell_t - A).
\end{align*}
By \cite[Proposition 2.2]{Nerman} with $\phi \equiv 1$, we observe that
\begin{equation}
  \label{eqn:typicalGrowthRate}
  \lim_{t \to \infty} e^{-\alpha t} \E(\#\mathbbm{T}_{t}) = c_\star,
\end{equation}
with $c_\star$ the constant defined in \eqref{eqn:defCstar}. Therefore, there exists $C > 0$ such that for all $t \geq 0$, we have
\[
  \E(\mathcal{E}^\mathrm{i}_t([-A,\infty)) - \tilde{\mathcal{E}}^\mathrm{i}_t([-A,\infty)))
  \leq C e^{\alpha (t - 3\ell_t/2)} \P(T \geq \ell_t - A),
\]
which converges to $0$ as $t \to \infty$ since $\P(T > \ell_t - A) \sim e^{\beta A}e^{-\alpha (t-\ell_t)}$.
\end{proof}

In order to estimate the asymptotic behaviour of the joint Laplace transform of $\mathcal{E}_t^\mathrm{p}$ and $\tilde{\mathcal{E}}_t^\mathrm{i}$, we rely on the convergence of general branching processes counted with their characteristics \cite[Section 6.9]{Jagers}. In particular, we will use the following result, which is an adaptation to our settings of \cite[Theorem 3.1]{Nerman}.
\begin{fact}
\label{fct:Nerman}
Let $\phi$ be an c\`adl\`ag function $\R_+\to \R_+$, we write
\[
  Z_t^\phi = \sum_{u \in \mathcal{N}_t} \phi(t - b_u).
\]
Under assumptions \eqref{eqn:supercritical}, \eqref{eqn:llogl} and \eqref{eqn:nonLattice}, we have
\[
  \lim_{t \to \infty} e^{-\alpha t} Z_t^\phi =  m^\phi Z_\infty \text{ in probability,}
\]
where $m^\phi = c_\star \int_0^\infty \alpha e^{-\alpha t} \phi(t) \P(T > t) \dd t$ and $c_\star$ is the constant defined in \eqref{eqn:defCstar}. If $m^\phi = \infty$, the equality remains valid using the convention that $m^\phi Z_\infty$ is zero if $Z_\infty =0$ and infinite otherwise.
\end{fact}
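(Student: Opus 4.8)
The plan is to recognise $Z^\phi_t$ as a general (Crump--Mode--Jagers) branching process counted with a random characteristic, to invoke Nerman's convergence theorem for bounded characteristics, to identify the limiting constant by a renewal computation, and finally to remove the boundedness restriction by a monotone truncation. First I would note that the Sevast'yanov process is the CMJ process in which each individual $u$ reproduces at the single time $T_u$ by producing $L_u$ children, with mean reproduction measure $\mu$ given by $\int f\,\dd\mu = \E(Lf(T))$; then \eqref{eqn:malthusianParameter} reads $\int_0^\infty e^{-\alpha a}\mu(\dd a)=1$, so $\alpha$ is exactly the Malthusian parameter in Nerman's sense. Since $u\in\mathcal{N}_t$ if and only if $0\le t-b_u<T_u$, I would write
\[
  Z^\phi_t = \sum_{u\in\T_t}\chi_u(t-b_u), \qquad \chi_u(a):=\phi(a)\ind{a<T_u},
\]
exhibiting $Z^\phi_t$ as the process counted with the random characteristic $\chi$. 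I would also record that the martingale $(Z_t)$ of \eqref{eqn:malthusianMartingale} is Nerman's intrinsic martingale read along the stopping line of individuals first born after $t$, so that its terminal value is precisely the $Z_\infty$ appearing in the statement.

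Next I would treat bounded $\phi$. Then $\chi$ is a.s.\ c\`adl\`ag with $e^{-\alpha a}\chi_u(a)\le\norm{\phi}_\infty e^{-\alpha a}$, so $a\mapsto e^{-\alpha a}\E(\chi(a)) = e^{-\alpha a}\phi(a)\P(T>a)$ is directly Riemann integrable and Nerman's regularity conditions on the characteristic hold; together with \eqref{eqn:supercritical}, \eqref{eqn:llogl} and \eqref{eqn:nonLattice}, the theorem \cite[Theorem 3.1]{Nerman} gives $e^{-\alpha t}Z^\phi_t\to m^\phi Z_\infty$ almost surely, with
\[
  m^\phi = \left(\int_0^\infty a\,e^{-\alpha a}\,\mu(\dd a)\right)^{-1}\!\int_0^\infty e^{-\alpha a}\phi(a)\P(T>a)\,\dd a = c_\star \int_0^\infty \alpha\, e^{-\alpha a}\phi(a)\P(T>a)\,\dd a,
\]
since the denominator equals $\E(Te^{-\alpha T}L)$ and $c_\star=(\alpha\E(Te^{-\alpha T}L))^{-1}$. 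I would sanity-check the constant against the known cases: $\phi\equiv1$ returns $c_\star(1-\E(e^{-\alpha T}))$, matching \eqref{eqn:exponentialGrowth}, and the unkilled characteristic returns $c_\star$, matching \eqref{eqn:typicalGrowthRate}. I would also record the first-moment asymptotics $e^{-\alpha t}\E(Z^\phi_t)\to m^\phi$, obtained from the renewal equation $\E(Z^\phi_t)=\phi(t)\P(T>t)+\int_{[0,t]}\E(Z^\phi_{t-s})\,\mu(\dd s)$ after discounting by $e^{-\alpha t}$ and applying the key renewal theorem; this is the tool for the truncation step.

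Then I would pass to arbitrary non-negative c\`adl\`ag $\phi$ by setting $\phi_n:=\min(\phi,n)\,\indset{[0,n]}$, so that $\phi_n\uparrow\phi$ pointwise, $Z^{\phi_n}_t\le Z^\phi_t$, and $m^{\phi_n}\uparrow m^\phi$ by monotone convergence. For each fixed $n$ the bounded case gives $e^{-\alpha t}Z^{\phi_n}_t\to m^{\phi_n}Z_\infty$ in probability, whence $\liminf_t e^{-\alpha t}Z^\phi_t\ge m^{\phi_n}Z_\infty$. If $m^\phi=\infty$, this forces $e^{-\alpha t}Z^\phi_t\to\infty$ on $\{Z_\infty>0\}$, while on $\{Z_\infty=0\}$ (the extinction event up to a null set under \eqref{eqn:llogl}) the set $\mathcal{N}_t$ is eventually empty and $Z^\phi_t$ vanishes, matching the stated convention. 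If $m^\phi<\infty$, I would control the remainder $Z^\phi_t-Z^{\phi_n}_t=Z^{\phi-\phi_n}_t\ge 0$ through the first-moment estimate $e^{-\alpha t}\E(Z^{\phi-\phi_n}_t)\to m^\phi-m^{\phi_n}$, which tends to $0$ as $n\to\infty$; Markov's inequality then bounds $e^{-\alpha t}(Z^\phi_t-Z^{\phi_n}_t)$ uniformly in $t$ in probability, and an $\varepsilon/3$ argument (fix $n$ large, then let $t\to\infty$) upgrades the convergence of $e^{-\alpha t}Z^{\phi_n}_t$ to the convergence in probability of $e^{-\alpha t}Z^\phi_t$ towards $m^\phi Z_\infty$.

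The bulk of the analytic content is carried by Nerman's theorem, so the work I expect to add is twofold. The first point is the clean identification of Nerman's intrinsic-martingale limit with the specific $Z_\infty$ of \eqref{eqn:malthusianMartingale}, a stopping-line and convention-matching issue that the $\phi\equiv1$ comparison with \eqref{eqn:exponentialGrowth} pins down. The hard part, however, will be the removal of the boundedness assumption together with the dichotomy $m^\phi<\infty$ versus $m^\phi=\infty$: it hinges on the first-moment estimate $e^{-\alpha t}\E(Z^{\phi-\phi_n}_t)\to m^{\phi-\phi_n}$ for the possibly unbounded remainder $\phi-\phi_n$, which requires justifying the key renewal theorem for a directly-Riemann-integrable discounted integrand and checking that the resulting control is uniform in $t$. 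It is precisely this truncation step that confines the conclusion to convergence in probability rather than almost sure convergence.
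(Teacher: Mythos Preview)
The paper does not prove this statement at all: it is labelled a \emph{Fact}, introduced as ``an adaptation to our settings of \cite[Theorem~3.1]{Nerman}'', and then simply used. So there is no paper proof to compare against.

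Your proposal is a correct and careful way to carry out that adaptation. The identification of $Z^\phi_t$ as the CMJ process counted with the random characteristic $\chi_u(a)=\phi(a)\ind{a<T_u}$ is exactly right, and your computation of the constant matches the paper's normalisation since $\int_0^\infty a e^{-\alpha a}\mu(\dd a)=\E(Te^{-\alpha T}L)=(\alpha c_\star)^{-1}$. Your sanity checks against \eqref{eqn:exponentialGrowth} and \eqref{eqn:typicalGrowthRate} are also correct. One remark: Nerman's theorem in \cite{Nerman} already accommodates unbounded characteristics under his Conditions~5.1--5.2 (which amount to domination of $e^{-\alpha a}\chi(a)$ by an integrable decreasing function), so for many $\phi$ the truncation step is not strictly needed; your monotone-truncation route is nonetheless a clean and self-contained way to handle the general case, including the $m^\phi=\infty$ dichotomy that the paper's statement explicitly singles out. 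In short, you have supplied a proof where the paper chose to quote one.
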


We now have introduced all the necessary tools to prove Proposition \ref{prop:cv}.

\begin{proof}[Proof of Proposition~\ref{prop:cv}]
For all $t > 0$, we write $c_t = t - \ell_t$. Observe that any individual contributing to $\crochet{\mathcal{E}^\mathrm{p}_t,\phi}$ has to satisfy $t - b_u - \ell_t \geq -A$, thus being born before time $c_t+A$ and still alive at time $t$. Similarly, individuals contributing to $\crochet{\tilde{\mathcal{E}}^\mathrm{i}_t,\psi}$ must have a lifetime longer than $\ell_t- A$ while being born after time $t - 3 \ell_t/2$. Therefore, for $t$ large enough, all these individuals must be alive at time $c_t + A$.

Let $\mathcal{F}_t = \sigma(b_u, u \in \mathbbm{T}_t)$ the natural filtration associated to the Sevast'yanov process. Under this filtration, information on the birth time and genealogical relationships of individuals alive at time $t$ are available, as well as the fact that their death time occurs after time $t$. Using the branching property, at time $c_t+A$, we observe that
\begin{align*}
  &\E\left( \exp\left( -\crochet{\mathcal{E}^\mathrm{p}_t,\phi} -\crochet{\tilde{\mathcal{E}}^\mathrm{i}_t, \psi} \right)\middle| \mathcal{F}_{c_t+A}\right)\\
  = &\prod_{u \in \mathcal{N}_{c_t+A}} \E\left( e^{-\phi(c_t- b_u)}\ind{T_u > t-b_u} +e^{-\psi(T_u-\ell_t)}\ind{T_u \leq t-b_u} \middle| \mathcal{F}_{c_t+A} \right) \\
  = &\prod_{u \in \mathcal{N}_{c_t+A}} \left( 1 - g(c_t+A-b_u) \right),
\end{align*}
where, for $x \geq 0$, we write
\begin{multline*}
  g(x) = (1- e^{-\phi(x-A)})\P\left(T > t-(c_t+A-x)\middle| T > x \right)\\
   + \E\left((1 - e^{-\psi(T-\ell_t)}) \ind{T \leq t- (c_t+A-x)} \middle| T > x\right).
\end{multline*}

We first use \eqref{eqn:regularTail} to obtain that
\begin{multline*}
  (1- e^{-\phi(x-A)})\P\left(T > t-(c_t+A-x)\middle| T > x \right)\\
  \sim_{t \to \infty} (1-e^{-\phi(x-A)}) e^{-\beta(x-A)} \frac{\P(T > \ell_t)}{\P(T > x)}.
\end{multline*}

Moreover, using again that for all $z \in \R$,
\[
  \lim_{t \to \infty} \frac{\P(T > \ell_t + z)}{\P(T > \ell_t)} = e^{-\beta z},
\]
we deduce that $\frac{\P(T \in \ell_t + \cdot)}{\P(T > \ell_t)}$ converges vaguely to the distribution $\beta e^{-\beta x} \dd x$ on $\R$. Then, as $1 - e^{-\psi}$ is continuous, bounded, and supported on $[-A,\infty)$, we have
\begin{multline*}
  \E\left((1 - e^{-\psi(T-\ell_t)}) \ind{T \leq t- (c_t+A-x)} \middle| T > x\right)\\
  \sim_{t \to \infty} \int_{-A}^{x-A} (1 - e^{-\psi(y)}) \beta e^{-\beta y} \dd y\frac{\P(T > \ell_t)}{\P(T > x)}.
\end{multline*}
Consequently, as $\P(T > \ell_t) \sim_{t \to \infty} e^{-\alpha c_t}$, we conclude that
\begin{multline*}
  \log \E\left( \exp\left( -\crochet{\mathcal{E}^\mathrm{p}_t,\phi} -\crochet{\tilde{\mathcal{E}}^\mathrm{i}_t, \psi} \right)\middle| \mathcal{F}_{c_t+A}\right)\\
  \sim_{t \to \infty} - e^{\alpha A}e^{-\alpha (c_t+A)} \sum_{u \in \mathcal{N}_{c_t+A}} h(c_t+A-b_u),
\end{multline*}
with $h(x) = \frac{1}{\P(T > x)} \left((1-e^{-\phi(x-A)}) e^{-\beta(x-A)} + \int_{-A}^{x-A} (1 - e^{-\psi(y)}) \beta e^{-\beta y} \dd y \right)$.

We now use Fact~\ref{fct:Nerman}, observing that
\[
  m^h = c_\star  \int_0^\infty h(t) \alpha e^{-\alpha  t} \P(T > t)\dd t <\infty,
\]
to deduce that
\[
  \lim_{t \to \infty} \log \E\left( \exp\left( -\crochet{\mathcal{E}^\mathrm{p}_t,\phi} -\crochet{\tilde{\mathcal{E}}^\mathrm{i}_t, \psi} \right)\middle| \mathcal{F}_{c_t+A}\right)= - e^{\alpha A} m^h Z_\infty \text{ in probability,}
\]
therefore, by the dominated convergence theorem and the tower property of conditional expectation, we have
\[
  \lim_{t \to \infty} \E\left(\exp\left( -\crochet{\mathcal{E}^\mathrm{p}_t,\phi} -\crochet{\tilde{\mathcal{E}}^\mathrm{i}_t, \psi} \right) \right)  = \E\left( e^{- e^{\alpha A}m^h Z_\infty} \right).
\]
Finally, since $\phi, \psi$ are supported on $[-A,\infty)$, we note that the  positive constant $m^h$ satisfies
\begin{align*}
  &\phantom{=}c_\star^{-1} e^{\alpha A}m^h = \int_0^\infty  h(x) \P(T > x) \alpha e^{-\alpha (x-A)} \dd x\\
  &= \int_0^\infty (1 - e^{-\phi(x-A)}) \alpha e^{-(\alpha + \beta) (x-A)} \dd x + \int_0^\infty \left\{\int_{-A}^{x-A} (1 -e^{-\psi(y)})\beta e^{-\beta y} \dd y \right\} \alpha e^{-\alpha(x-A)} \dd x\\
  &= \int_\R \alpha (1 - e^{-\phi(x)}) e^{-(\alpha + \beta) x} \dd x + \int_{\R} \int_{-\infty}^x \beta (1 - e^{-\psi(y)}) e^{-\beta y} \dd y \alpha e^{-\alpha x} \dd x\\
  &= \int_\R \left\{\alpha (1 - e^{-\phi(x)}) + \beta (1-e^{-\psi(x)})\right\} e^{-(\alpha + \beta) x} \dd x.
\end{align*}
The proof is now complete.
\end{proof}

We now turn to the proof of Theorem~\ref{thm:main}, using Proposition~\ref{prop:cv} and Lemma~\ref{lem:goodApproximation}.

\begin{proof}[Proof of Theorem~\ref{thm:main}]
By Proposition~\ref{prop:cv}, we obtain immediately that
\[
  \lim_{t \to \infty} (\mathcal{E}^\mathrm{p}_t, \tilde{\mathcal{E}}^\mathrm{i}_t) =  (\mathcal{E}^\mathrm{p}_\infty, {\mathcal{E}}^\mathrm{i}_\infty),
\]
in law for the topology of vague convergence, by identification of the Laplace transform of the limit, see \cite[Chapter 15]{Kal}. In addition, as the test functions we considered are unbounded on the right, we also obtain the convergence in distribution
\[
  \lim_{t \to \infty} (\max \mathcal{E}^\mathrm{p}_t, \max \tilde{\mathcal{E}}^\mathrm{i}_t) =  (\max \mathcal{E}^\mathrm{p}_\infty, \max {\mathcal{E}}^\mathrm{i}_\infty).
\]
Finally the proof of Theorem~\ref{thm:main} follows from Lemma~\ref{lem:goodApproximation} and Slutsky's lemma.
\end{proof}

{We end this article with the proof of Theorem~\ref{thm:ps}, that can be decomposed into two parts. We first show an upper bound on the growth rate of the longest edges, before obtaining an analogue lower bound.}

\begin{lemma}
\label{lem:ub}
Under assumptions \eqref{eqn:supercritical}, \eqref{eqn:malthusianParameter} and \eqref{eqn:regularTail}, we have
\[
  \limsup_{t \to \infty} \frac{\max\left\{M_t^\mathrm{i},M_t^\mathrm{p}\right\}}{t} \leq \frac{\alpha}{\alpha + \beta} \quad \text{a.s.}
\]
where $\alpha$ is the constant defined in \eqref{eqn:malthusianParameter} and $\beta$ the one defined in \eqref{eqn:regularTail}.
\end{lemma}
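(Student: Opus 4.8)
The plan is to prove this upper bound by a first-moment estimate combined with the Borel--Cantelli lemma, using the elementary observation that a long edge present in $\mathbbm{T}_t$ --- whether pendant or interior --- must come from an individual born relatively early whose lifetime is atypically long. Write $M_t := \max\{M_t^\mathrm{i},M_t^\mathrm{p}\}$. Two simple facts will be used repeatedly: $M_t \le t$ for every $t$ (no edge in $\mathbbm{T}_t$ can be longer than $t$), and $t \mapsto M_t$ is non-decreasing, because the death of an individual converts its pendant edge into an interior edge of at least the same length, while interior edges never disappear. In particular, when $\beta = 0$ the statement is immediate, so we may assume $\beta > 0$. The key reduction is: if $M_t \ge \ell$ then there is $u \in \mathbbm{T}$ with $b_u \le t - \ell$ and $T_u \ge \ell$. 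Indeed, an interior edge of length $\ge \ell$ corresponds to an individual $u$ with $d_u \le t$ and $T_u = d_u - b_u \ge \ell$, so $b_u = d_u - T_u \le t - \ell$; a pendant edge of length $\ge \ell$ corresponds to an individual $u$ alive at time $t$ with $t - b_u \ge \ell$, so $b_u \le t - \ell$ and $T_u = d_u - b_u > t - b_u \ge \ell$. Hence, writing $N_s(\ell) := \sum_{u \in \mathbbm{T}} \ind{b_u \le s}\ind{T_u \ge \ell}$, we have $\{M_t \ge \ell\} \subseteq \{N_{t - \ell}(\ell) \ge 1\}$.

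The next step is to bound $\E(N_s(\ell))$ from above. Since $b_u$ and the event $\{u \in \mathbbm{T}\}$ are measurable functions of the marks $(T_v,L_v)$ attached to the strict ancestors of $u$, they are independent of $T_u$, so $\E(N_s(\ell)) = \E(\# \mathbbm{T}_s)\, \P(T \ge \ell)$. For the first factor, pick $\theta > \alpha$; then $\E(L e^{-\theta T}) < 1$ by strict monotonicity of $\theta \mapsto \E(Le^{-\theta T})$, and using $\ind{b_u \le s} \le e^{\theta(s - b_u)}$ together with the branching identity $\E\big(\sum_{|u| = n,\, u \in \mathbbm{T}} e^{-\theta b_u}\big) = \E(Le^{-\theta T})^n$ yields $\E(\# \mathbbm{T}_s) \le e^{\theta s}\big(1 - \E(Le^{-\theta T})\big)^{-1}$. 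For the second factor, \eqref{eqn:regularTail} implies --- by iterating the convergence $\P(T > t+1)/\P(T > t) \to e^{-\beta}$ --- that for every $\delta \in (0,\beta)$ there is a constant $C_\delta$ with $\P(T > s) \le C_\delta e^{-(\beta - \delta) s}$ for all $s \ge 0$.

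I would then combine these estimates with $\ell = \ell_t^\epsilon := (\tfrac{\alpha}{\alpha + \beta} + \epsilon) t$ for a fixed $\epsilon \in (0, \tfrac{\beta}{\alpha + \beta})$, so that $t - \ell_t^\epsilon = (\tfrac{\beta}{\alpha + \beta} - \epsilon)t > 0$. This gives, for some constant $C$ and all large $t$,
\[
  \P(M_t \ge \ell_t^\epsilon) \le \E\big(N_{t - \ell_t^\epsilon}(\ell_t^\epsilon)\big) \le C \exp\big(\rho(\theta,\delta)\, t\big),
\]
where $\rho(\theta,\delta) := \theta \big(\tfrac{\beta}{\alpha + \beta} - \epsilon\big) - (\beta - \delta)\big(\tfrac{\alpha}{\alpha + \beta} + \epsilon\big)$. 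Since $\rho(\alpha, 0) = -(\alpha + \beta)\epsilon < 0$ and $\rho$ is continuous, one may fix $\theta > \alpha$ and $\delta \in (0,\beta)$ with $\rho(\theta,\delta) < 0$; then $\sum_{n \ge 1} \P(M_n \ge \ell_n^\epsilon) < \infty$, so by Borel--Cantelli, almost surely $M_n < \ell_n^\epsilon$ for all large $n \in \N$. Using the monotonicity of $t \mapsto M_t$, for all large $t$ we get $M_t \le M_{\lceil t \rceil} < \ell_{\lceil t \rceil}^\epsilon \le (\tfrac{\alpha}{\alpha + \beta} + \epsilon)(t + 1)$, hence $\limsup_{t \to \infty} M_t / t \le \tfrac{\alpha}{\alpha + \beta} + \epsilon$ a.s. Letting $\epsilon \downarrow 0$ along a sequence finishes the proof.

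Since this is the ``easy'' (first-moment) half of Theorem~\ref{thm:ps}, I do not expect a serious obstacle. The points that deserve care are: (i) the unified treatment of pendant and interior edges through the single functional $N_{t - \ell}(\ell)$; (ii) the fact that one is forced to use an exponent $\theta$ strictly larger than the Malthusian parameter $\alpha$ to control $\E(\# \mathbbm{T}_s)$ --- the natural value $\theta = \alpha$ produces a divergent geometric series --- with the required room coming precisely from the fact that $\rho(\theta,\delta) \to -(\alpha + \beta)\epsilon < 0$ as $(\theta,\delta) \to (\alpha, 0)$; and (iii) upgrading the pointwise ratio convergence \eqref{eqn:regularTail} to a uniform exponential tail bound for $T$, and passing from the discrete-time Borel--Cantelli conclusion to a continuous-time almost-sure statement, both of which are routine once the monotonicity of $M_t$ is in hand.
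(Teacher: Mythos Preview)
Your proof is correct and follows essentially the same route as the paper: reduce $\{M_t \ge \ell\}$ to the existence of $u$ with $b_u \le t-\ell$ and $T_u \ge \ell$, factor the first moment as $\E(\#\mathbbm{T}_{t-\ell})\,\P(T \ge \ell)$, bound each piece, apply Borel--Cantelli along integers, and upgrade via monotonicity of $M_t$. The only noteworthy difference is in how $\E(\#\mathbbm{T}_s)$ is controlled. You use the elementary geometric-series bound $\E(\#\mathbbm{T}_s) \le e^{\theta s}/(1-\E(Le^{-\theta T}))$ for some $\theta>\alpha$, and then absorb the slack $\theta-\alpha$ into the negativity of $\rho(\theta,\delta)$; the paper instead invokes the renewal-theoretic estimate \eqref{eqn:typicalGrowthRate} (Nerman's result, already quoted earlier) to get $\E(\#\mathbbm{T}_s)\le C e^{\alpha s}$ directly at the Malthusian rate. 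So your concern in point~(ii) --- that one is ``forced'' to take $\theta>\alpha$ --- is an artefact of your chosen bound rather than an intrinsic feature of the problem: with \eqref{eqn:typicalGrowthRate} in hand the sharp exponent $\alpha$ is available immediately. Your version has the virtue of being fully self-contained; the paper's is slightly shorter given what has already been established.
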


\begin{proof}
We write $M_t = \max \left\{M_t^\mathrm{i},M_t^\mathrm{p}\right\}$ the length of the longest overall branch alive at time $t$. Using that $M_t \leq t$, this result is immediate in the case $\beta = 0$, we therefore restrict ourselves in this proof to the case $\beta > 0$.

Let $\epsilon > 0$. Using that a branch of length $\ell$ as to be born before time $t - \ell$, we have by union bound:
\begin{equation*}
  \P(M_t > \ell)
  \leq \E\left( \sum_{u \in \T_{t-\ell}} \ind{T_u > \ell} \right)
  = \E(\#\T_{t-\ell}) \P(T > \ell),
\end{equation*}
in a similar way as in the proof of Lemma~\ref{lem:goodApproximation}. Using \eqref{eqn:regularTail}, we observe that for all $\beta' \in (0,\beta)$, we have
\[
  \lim_{\ell \to \infty} \P(T > \ell) e^{\beta' \ell} = 0.
\]
As a result of taking $\beta'$ sufficiently close to $\beta$ in this, and also using  \eqref{eqn:typicalGrowthRate}, we obtain that for all $\epsilon \in (0,\frac{\beta}{\alpha + \beta})$, there exists $C > 0$ such that for all $t$ large enough,
\begin{align*}
  \P(M_t > (\tfrac{\alpha}{\alpha + \beta}+\epsilon) t)
  &\leq C \e^{t \alpha (\frac{\beta}{\alpha + \beta} - \epsilon)} e^{-t\beta(\frac{\alpha}{\alpha + \beta}+\frac{\epsilon}{2})}\\
  &\leq C e^{-t \epsilon(\alpha + \frac{\beta}{2})}.
\end{align*}
Therefore, from a direct application of Borel-Cantelli lemma, we have
\[
  \limsup_{n \to \infty} \frac{M_n}{n} \leq \frac{\alpha}{\alpha + \beta} \quad \text{a.s.}
\]
To extend this result at all positive times, we remark that $t \mapsto M_t$ is a.s. non-decreasing, therefore $M_\floor{t} \leq M_t \leq M_\ceil{t}$, which completes the proof.
\end{proof}

The lower bound is obtained again with an application of the Borel-Cantelli lemma, using a classical two-steps decomposition of the branching process expressing that each inherited event occurring with positive probability will occur almost surely on the survival event of the branching process.
\begin{lemma}
\label{lem:lb}
Under the assumptions of Theorem \ref{thm:ps}, we have
\[
  liminf_{t \to \infty} \frac{M_t^\mathrm{p}}{t} \geq \frac{\alpha}{\alpha + \beta} \quad \text{a.s. on the survival event}.
\]
In addition, if $\beta > 0$ then
\[
  \liminf_{t \to \infty} \frac{M_t^\mathrm{i}}{t} \geq \frac{\alpha}{\alpha + \beta} \quad \text{a.s. on the survival event}.
\]
\end{lemma}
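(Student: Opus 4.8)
\textbf{Strategy.}
The plan is to use the classical ``inherited property'' argument: an event that holds with positive probability for the whole process actually holds almost surely on the survival event, because infinitely many independent subtrees each get a chance at it. Concretely, fix $\delta > 0$ small and set $\ell = (\tfrac{\alpha}{\alpha+\beta} - \delta)t$. We want to show that, on the survival event, almost surely for all $t$ large enough there is an individual $u \in \mathcal{N}_t$ (resp.\ $u \in \mathbb{T}_t \setminus \mathcal{N}_t$, when $\beta > 0$) with branch length at least $\ell$. Since $M^\mathrm{p}_t$ and $M^\mathrm{i}_t$ are not monotone in $t$, the argument will be set up along a geometric sequence of times $t_k = \rho^k$ for some $\rho > 1$ chosen close to $1$, and then one interpolates: if at time $t_k$ there is an individual alive that was born at time roughly $t_k - \ell_{t_k}$ and will live much longer, it still contributes a long pendant edge at all times in $[t_k, t_{k+1}]$.

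\textbf{Key steps.}
First I would fix a large deterministic time $s$ and a reference generation. By supercriticality and \eqref{eqn:exponentialGrowth} (or more crudely by \eqref{eqn:typicalGrowthRate}), on the survival event $\#\mathcal{N}_s \to \infty$; more usefully, $e^{-\alpha s}\#\mathcal{N}_s$ converges to a positive limit on survival. Second, I would estimate, for a \emph{single} starting individual born at time $0$, the probability $p_t$ that its descendance produces, by time $t$, a pendant (resp.\ interior) edge of length $\geq (\tfrac{\alpha}{\alpha+\beta}-\delta)t$. Using the first-moment heuristic behind \eqref{eqn:defLt}: the expected number of individuals born in the window around time $t - \ell$ is of order $e^{\alpha(t-\ell)}$ by \eqref{eqn:typicalGrowthRate}, and each independently has probability $\sim \P(T > \ell)$ of starting a long pendant edge, or probability $\sim \P(T \in [\ell, t-b_u])$ of completing a long interior edge (this second quantity is $\asymp \P(T > \ell)$ precisely when $\beta > 0$, which is why that case is separated — see Remark~\ref{rem:heavyTail}). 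Hence the expected number of such long edges is of order $e^{\alpha(t-\ell)}\P(T > \ell)$, which for $\ell = (\tfrac{\alpha}{\alpha+\beta}-\delta)t$ tends to $+\infty$ (by Lemma~\ref{lem:asymptoticLt} and \eqref{eqn:linearGrowthL}, the borderline is $\ell_t \sim \tfrac{\alpha}{\alpha+\beta}t$, and we are strictly below it). A second-moment / Paley--Zygmund estimate, or a direct spine-change-of-measure argument as in the proof of Proposition~\ref{prop:cv}, then gives that $\liminf_t p_t \geq c > 0$ for some constant $c$. Third, the two-step decomposition: condition on $\mathcal{F}_s$; the subtrees rooted at the individuals of $\mathcal{N}_s$ are i.i.d.\ copies of the whole process (time-shifted), and on survival there are $\#\mathcal{N}_s \to \infty$ of them. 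By the conditional Borel--Cantelli lemma applied along $t_k = \rho^k$, the event ``some individual born in subtree $v$ after time $s$ produces a long edge before $t_k$'' happens for infinitely many $k$ for at least one $v$; pushing $s \to \infty$ along a subsequence and using that survival of the whole process forces survival (hence an unbounded supply of fresh subtrees) of infinitely many of them, one upgrades ``infinitely often'' to ``eventually always'', giving $\liminf_t M^\mathrm{p}_t / t \geq \tfrac{\alpha}{\alpha+\beta} - \delta$ a.s.\ on survival. Letting $\delta \downarrow 0$ along a countable sequence finishes the pendant case; the interior case is identical provided $\beta > 0$.

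\textbf{Main obstacle.}
The delicate point is the lower bound $\liminf_t p_t > 0$ on the single-ancestor success probability: a first-moment computation only shows the \emph{expected} number of long edges is large, not that at least one exists with probability bounded away from $0$. I would handle this either via a second-moment estimate on the number of long edges born in a suitable time window (controlling correlations between pairs of individuals, which factor through their most recent common ancestor and are manageable because branches are born at distinct times), or — cleaner and more in the spirit of Proposition~\ref{prop:cv} — by an additive-martingale change of measure: tilt by $Z_t$, follow the spine, and observe that under the tilted measure the spine individual has, with probability bounded below, a descendant born near time $t-\ell$ whose lifetime exceeds $\ell$; then transfer back. A secondary nuisance is the interpolation between the discrete times $t_k$ and general $t$, but since a pendant edge present at time $t_k$ of length $\geq (\tfrac{\alpha}{\alpha+\beta}-\delta)t_k$ persists (only getting longer) until the individual dies, and we may ask for the individual to moreover survive past $t_{k+1}$ — an event of probability bounded below by $\P(T > \ell + (\rho-1)t_k \mid T > \ell) \to e^{-\beta(\rho-1)t_k \cdot(\text{bounded})}$, which for $\rho$ close to $1$ costs only a further $\delta$-order loss — this is routine.
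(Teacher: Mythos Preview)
Your overall architecture matches the paper's: a two-step decomposition exploiting that on survival there are many independent subtrees, combined with Borel--Cantelli. The crucial difference is how one obtains the bound $\liminf_t p_t > 0$ on the single-ancestor success probability, which you correctly flag as the main obstacle. The paper does \emph{not} attempt a second-moment or spine computation here; it simply invokes Theorem~\ref{thm:main}, which has already been proved at this point. By that theorem $M^\mathrm{i}_t - \ell_t$ (and $M^\mathrm{p}_t - \ell_t$) converges in distribution to a random variable that is finite on $\{Z_\infty > 0\}$, so $\P\bigl(M^\mathrm{i}_t \leq (1-\delta)\tfrac{\alpha}{\alpha+\beta}\,t\bigr) \to \P(Z_\infty = 0) < 1$, yielding a uniform failure bound $\rho < 1$ for free. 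The paper then conditions on the number $\bar N_{\delta n}$ of individuals born in $[\delta n - 1,\delta n]$, uses that each such subtree fails independently with probability at most $\rho$, bounds $\P(A_n) \leq \rho^n$ on $\{\bar N_{\delta n} \geq n\}$, and applies Borel--Cantelli; on survival $\bar N_{\delta n} \geq n$ eventually by Fact~\ref{fct:Nerman}. Your route via Paley--Zygmund or a spine tilt would work in principle, but it is substantially more effort and you leave it as a sketch; the paper's shortcut renders all of that unnecessary.

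Two smaller points. First, $M^\mathrm{i}_t$ \emph{is} non-decreasing in $t$ (the set $\T_t\setminus\mathcal N_t$ of already-dead individuals only grows), so the paper works along the integers and invokes monotonicity directly; your geometric subsequence and survive-past-$t_{k+1}$ device are needed, if at all, only for $M^\mathrm{p}_t$. Second, the interpolation sentence is garbled: $\P\bigl(T > \ell + (\rho-1)t_k \,\big|\, T > \ell\bigr) \sim e^{-\beta(\rho-1)t_k} \to 0$, which is certainly not ``bounded below''. The fix is to demand from the outset an edge of length at least $\ell + (\rho-1)t_k$; this is still strictly below $\ell_{t_k}$ provided $\rho-1$ is small compared to $\delta$, so the expected count still diverges and the rest of the argument goes through. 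As written, though, that step is incorrect.
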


\begin{proof}
We only present the proof for $M_t^\mathrm{i}$ in the case $\beta > 0$, the proof for $M_t^\mathrm{p}$ following from the same computation. Using Theorem~\ref{thm:main} and that $\ell_t \sim t \frac{\alpha}{\alpha + \beta}$, for all $\delta > 0$ we have
\[
  \lim_{t \to \infty} \P(M_t^\mathrm{i} \leq t\tfrac{\alpha}{\alpha + \beta}(1 - \delta)) = \P(Z_\infty = 0) < 1,
\]
since the Sevast'yanov process survives with positive probability. Consequently, for all $\delta > 0$, there exists $\rho < 1$ such that for all $t$ large enough, we have
\begin{equation}
  \label{eqn:firstTool}
  \P(M_t^\mathrm{i} \leq t \tfrac{\alpha}{\alpha + \beta}(1 - \delta)) < \rho.
\end{equation}

We now write $\bar{N}_t = \sum_{u \in \T} \ind{b_u \in [t-1,t]}$ the number of individuals born between time $t$ and $t+1$. For $n \in \N$, we take interest in the event
\[
  A_n = \{M_n^\mathrm{i} \leq n \tfrac{\alpha}{\alpha + \beta}(1 - \delta)^2 \} \cap \{\bar{N}_{\delta n} \geq n\}.
\]
Using the branching property, i.e. the conditional independence of the $\bar{N}_{\delta n}$ subtrees born between times $n\delta - 1$ and $n \delta$, we have
\[
  \P(A_n) \leq \E\left( \rho^{\bar{N}_{\delta n}} \ind{\bar{N}_{\delta n} \geq n} \right) \leq \rho^n.
\]
Therefore, by Borel-Cantelli lemma, almost surely for $n$ large enough, we have $M_n^\mathrm{i} \geq n \tfrac{\alpha}{\alpha + \beta}(1-\delta)^2$ or $\bar{N}_{\delta n} \leq n$.

By Fact~\ref{fct:Nerman}, we have
\[
   \lim_{t \to \infty} e^{-\alpha t} \bar{N}_t = c Z_\infty \quad \text{a.s.}
\]
with $c$ a strictly positive constant. Consequently, almost surely for $n$ large enough, we have $\bar{N}_{\delta n} \geq n$ or $Z_\infty = 0$. In view of the previous result, and using that $\{Z_\infty = 0\}$ corresponds to the extinction time of the Sevast'yanov process, we conclude that a.s. on the survival event, $\liminf_{n \to \infty} \frac{M_n^\mathrm{i}}{n} \geq \frac{\alpha}{\alpha + \beta}$. This result is extended to $(M_t^\mathrm{i}, t \in \R_+)$ using the monotonicity of $t \mapsto M^\mathrm{i}_t$.
\end{proof}

\begin{proof}[Proof of Theorem~\ref{thm:ps}]
The theorem is obtain as an immediate consequence of Lemmas \ref{lem:ub} and \ref{lem:lb}.
\end{proof}

\paragraph{Acknowledgements.}
This work was carried out during the \emph{The 6th International Workshop on Branching Processes and their Applications}, we acknowledge the Universidad de Extremadura and the organisation team for their hospitality. In addition, SB is partially funded by  Research Development Fund of Xi'an Jiaotong-Liverpool University (RDF-23-01-134)., SH is partially supported by New Zealand Royal Society Te Apārangi Marsden Fund (22-UOA-052), and BM has received financial support from the CNRS through the MITI interdisciplinary program 80PRIME GEx-MBB.

In addition, we wish to thank the referee for their careful proofreading and for correcting an error in the previous version of this article, as well as Vladimir Vatutin for providing us references to \cite{Sevastyanov,Sevastyanov2}.

\bibliographystyle{plain}

\end{document}